\documentclass[titlepage,12pt]{article}
\usepackage{amsmath,amsthm,amssymb}
\usepackage[mathscr]{eucal}
\usepackage[latin1]{inputenc}
\usepackage{graphicx}
\usepackage{rotating}
\usepackage{soul}
\oddsidemargin=0in \evensidemargin=0in \textwidth=6.5in
\headheight=0pt \headsep=0pt \topmargin=0in \textheight=9.0in

\newtheorem{Def}{Definition}
\newtheorem{The}{Theorem}

\newtheorem{Cor}[The]{Corollary}

\newtheorem{Exam}[The]{Example}
\theoremstyle{definition}
\newtheorem{Rem}[The]{Remark}
\numberwithin{equation}{section} \numberwithin{The}{section}

\usepackage{color}

\begin{document}

\title{Sufficient conditions for some transform orders based on the quantile density ratio}

\author{ Antonio Arriaza \\ Dpto. Estad\'{\i}stica e Investigaci\'{o}n Operativa \\ Universidad de C\'{a}diz \\ Facultad de Ciencias Econ\'{o}micas y Empresariales \\ 11002 C\'{a}diz, SPAIN, \\ e-mail: \texttt{antoniojesus.arriaza@uca.es} \\ F\'{e}lix Belzunce and  Carolina Mart\'{i}nez-Riquelme \\ Dpto.
Estad\'{\i}stica e Investigaci\'{o}n Operativa \\ Universidad de Murcia
\\ Facultad de Matem\'{a}ticas, Campus de Espinardo \\
30100 Espinardo (Murcia), SPAIN, \\ e-mail: \texttt{belzunce@um.es, carolina.martinez7@um.es} } 

\date{}

\maketitle

\begin{abstract} In this paper we focus on providing sufficient conditions for some transform orders for which the quantile densities ratio is non-monotone and, therefore, the convex transform order does not hold. These results are interesting for comparing random variables with a non-explicit expression of their quantile functions or they are computationally complex. In addition, the main results are applied to compare two Tukey generalized distributed random variables and to establish new relationships among non-monotone and positive aging notions.

Keywords: Transform orders, Quantile density function, Unimodality. 

MSC2010 Classification: 60E15.

\end{abstract}

\section{Introduction}

Among the different criteria to compare random variables in the context of stochastic orders, there exist a group of criteria known in the literature as  transform orders (see M\"{u}ller and Stoyan, 2002, Shaked and Shanthikumar, 2007 and Belzunce et. al, 2015, for a review and an introduction to the topic). These criteria  have been used in different contexts such as reliability, risk theory and economics, and they are related to the comparison of two random variables in terms of their concentration (see Arnold and Sarabia, 2018). 

The definition of these orders is based on the notion of the quantile function, that we provide next. Let us consider a random variable $X$, with distribution function $F$, then the quantile function of $X$ is given by
\[
F^{-1}(p)=\inf\{x\in \mathbb R | F(x)\ge p\}, \text{ for all }p\in (0,1).
\]

The quantile function  is defined as a generalized inverse of the distribution function and it is the probabilistic counterpart of the percentile notion in descriptive statistics. In this paper, we will consider non-negative random variables with finite expectation and interval support (bounded or unbounded), denoted by $(l_X,u_X)$ for the random variable $X$. In short, that means, random variables are assumed to have density functions. In this case, assuming differentiability, the derivative of the quantile function is given by 
\[
\frac{d}{d p}F^{-1}(p)=\frac{1}{f(F^{-1}(p))},
\]
where $f$ is the density function associated to $F$. This function is known as the quantile density or sparsity function (see Parzen, 1979). 

Now, we proceed to recall the definitions of the different transform orders. Historically, the earliest stochastic orders considered as transform orders are the convex transform and the star-shaped order. The convex order was introduced by van Zwet (1964) for the comparison of two non-negative random variables in terms of their skewness. The star-shaped order can be found in Marhsall and Olkin (1979).

\begin{Def}
Let us consider two non-negative random variables $X$ and $Y$, with distribution functions $F$ and $G$ and density functions $f$ and $g$, respectively. 

\begin{itemize}

\item[a)] It is said that $X$ is smaller than $Y$ in the \textbf{convex transform order}, denoted by $X\leq_{c}Y$, if
\[
\frac{f(F^{-1}(p))}{g(G^{-1}(p))} \text{ is increasing in }p\in(0,1).
\]

\item[b)] It is said that $X$ is smaller than $Y$ in the \textbf{star-shaped order}, denoted by $X\leq_{\star}Y$, if
\[
\frac{G^{-1}(p)}{F^{-1}(p)} \text{ is increasing in }p\in(0,1).
\]

\end{itemize}
\end{Def}

Therefore, the convex and the star-shaped orders are defined in terms of the monotonicity of the ratio of the quantile densities and the quantile functions, respectively. These criteria can also be defined in terms of the convexity and the star-shaped property of the function $G^{-1}(F(x))$, respectively. It is well-known that the transformation $G^{-1}(F(x))$ maps the random variable $X$ onto $Y$, that is, $Y=_{\textup{st}}G^{-1}(F(X))$. Therefore, as it was pointed out by van Zwet (1964), if  $X\leq_{c}Y$, ``apart from an overall linear change of scale, such a non-decreasing, convex transformation of a random variable effects a contraction of the lower part of the scale of measurement and an extension of the upper part... the transformation should produce what one intuitively feels to be an increased skewness to the right". On the other hand, according to Doksum (1969), if ``$X\leq_{\star}Y$ then one would expect $Y$ to be ``more skewed to the left" than $X$. Moreover, if $X$ and $Y$ are random variables that represent ``time to failure," then $X\leq_{\star}Y$ indicates that $X$ is more subject to wear out than $Y$." At this point, we would like to clarify that both authors refer to the same type of asymmetry in different terms. Furthermore, the convex order can be restated in terms of the hazard rate. In particular, if we denote $\overline F=1-F$ and $\overline G = 1- G$, these being the survival functions and denote $r=f/\overline F$ and $s=g/\overline G$, these being the hazard rate functions of $X$ and $Y$, respectively, then $X\leq_{c}Y$ if, and only if,
\[
\frac{r(F^{-1}(p))}{s(G^{-1}(p))} \text{ is increasing in }p\in(0,1).
\]

The function $r(F^{-1}(p))$ is also known as the hazard quantile function (see Nair et. al, 2013). 

Motivated from the previous characterization, Kochar and Wiens (1987) introduce two new stochastic orders based on the mean residual quantile function. Given a random variable $X$, with distribution function $F$, the mean residual life function is given by
\[
m(t)=E[X-t|X>t]=\frac{\displaystyle \int_t^{+\infty} \overline F(x)dx}{\overline F(t)},\text{ for all }t\text{ such that }\overline F(t)>0.
\]

\begin{Def}
Let us consider two non-negative random variables $X$ and $Y$, with distribution functions $F$ and $G$ and mean residual life functions $m$ and $l$, respectively. 
\begin{itemize}

\item[a)] It is said that $X$ is smaller than $Y$ in the \textbf{dmrl order}, denoted by $X\leq_{dmrl}Y$, if
\[
\frac{l(G^{-1}(p))}{m(F^{-1}(p))}=\frac{\displaystyle \int_{G^{-1}(p)}^{u_Y}\overline G(x)dx}{\displaystyle \int_{F^{-1}(p)}^{u_X}\overline F(x)dx} \text{ is increasing in } p\in(0,1),
\]
or, equivalently, if
\begin{equation}\label{def-dmrl}
\frac{\displaystyle \int_p^1 \frac{1-q}{g(G^{-1}(q))}dq}{\displaystyle \int_p^1 \frac{1-q}{f(F^{-1}(q))}dq} \text{ is increasing in } p\in(0,1).
\end{equation}
\item[b)] It is said that $X$ is smaller than $Y$ in the \textbf{nbue order}, denoted by $X\leq_{nbue}Y$, if
\[
\frac{E[Y]}{E[X]}\le \frac{l(G^{-1}(p))}{m(F^{-1}(p))}=\frac{\displaystyle \int_{G^{-1}(p)}^{u_Y}\overline G(x)dx}{\displaystyle \int_{F^{-1}(p)}^{u_X}\overline F(x)dx}, \text{ for all } p\in(0,1). 
\]

\end{itemize}
\end{Def}

The function $m(F^{-1}(p))$ is also known as the mean residual quantile function (see Nair et. al, 2013). From the meaning of this function in terms of reliability, the dmrl and nbue orders can be interpreted in the sense that $X$ is more subject to wear out than $Y$, as in the case of the star-shaped order.

Recently,  Arriaza et. al (2017) introduced a new transform order, the qmit order, based on the mean inactivity time function. This order occupies an intermediate position between the convex transform order and the starshaped order. Hence, it can be interpreted in terms of skewness as well. On the other hand, the qmit order compares two random variables in terms of the quotient of their mean inactivity time at any quantile and it can be considered as a dual order of the dmrl order. Furthermore, the qmit order is coherent with the interpretation of the convex transform and starshaped orders in terms of reliability. Hence, if $X\leq_{qmit} Y$ one would expect that $Y$ represents the lifetime of a more reliable system.

Given a non-negative random variable $X$, with distribution function $F$, the mean inactivity time function is given by
\[
\overline m(t)=E[t-X|X<t]=\frac{\displaystyle \int_{0}^t F(x)dx}{ F(t)},\text{ for all }t\text{ such that }F(t)>0.
\]

\begin{Def}
Let us consider two non-negative random variables $X$ and $Y$, with distribution functions $F$ and $G$ and mean inactivity time  functions $\overline m$ and $\overline l$, respectively. We say that $X$ is smaller than $Y$ in the \textbf{qmit order}, denoted by $X\leq_{qmit}Y$, if
\[
\frac{\overline l(G^{-1}(p))}{\overline m(F^{-1}(p))}=\frac{\displaystyle \int_{l_Y}^{G^{-1}(p)} G(x)dx}{\displaystyle \int_{l_X}^{F^{-1}(p)} F(x)dx} \text{ is increasing in } p\in(0,1), 
\]
or, equivalently, if
\[
\frac{\displaystyle \int_0^p \frac{q}{g(G^{-1}(q))}dq}{\displaystyle \int_0^p \frac{q}{f(F^{-1}(q))}dq} \text{ is increasing in } p\in(0,1).
\]
\end{Def}

Among the previous stochastic orders, the following implications hold when the left extreme of the supports is equal to 0.  
\begin{equation}\label{cuadro}
\begin{tabular}{c c c c c }
  $X\leq_{\textup{c}}Y$ & $\Rightarrow$ & $X\leq_{\textup{qmit}}Y$ & $\Rightarrow$ & $X\leq_{\textup{*}}Y$  \\
 & \rotatebox{45}{$\Downarrow$} & &  & $\Downarrow$  \\
  &  & $X\leq_{\textup{dmrl}}Y$ & $\Rightarrow$ & $X\leq_{\textup{nbue}}Y.$  
\end{tabular}
\end{equation}

A natural question which arises looking at the definitions of the previous transform orders is how these criteria can be assured when the quantile functions and/or some incomplete integrals of the quantile functions do not have explicit expressions (or they are not easy to deal with). One possibility is to check the convex transform order because this criterion is stronger than the remaining transform orders (see \eqref{cuadro}). However, there are situations where the convex order does not hold. Let us see some examples.

Let us recall that $X\le_{\textup{c}}Y$ holds if, and only if the transformation $G^{-1}(F(x))$ is convex over the support of $X$. Therefore, given two samples $(X_1,\ldots,X_n)$ and $(Y_1,\ldots,Y_m)$ of $X$ and $Y$, respectively, we can have an approximation of the behaviour of $G^{-1}(F)$ by plotting the transformation of $G_m^{-1}(F_n)$, where $F_n$ and $G_m$ are the empirical distribution functions of $(X_1,\ldots,X_n)$ and $(Y_1,\ldots,Y_m)$, respectively.

Next, we consider the plots of $G_m^{-1}(F_n)$ for two real data sets. 

In the first example, we analyze a well-known data set, where $X$ and $Y$ denote the lifetimes of two populations of guinea pigs (see Bjerkedal, 1960). The first sample corresponds to a treatment group ($X$) which received a dose of tubercle bacilli, and the second one  to a control group ($Y$). As we can see in Figure \ref{example-unimodality} (on the left), the plot is apparently convex initially, and later is concave.

In the second example, we consider a data set of two populations of lifetimes of SPF Fisher 344 male rats (Yu et al., 1982). The first sample corresponds to a control group ($X$) under an \textit{ad libitum} (non restricted) diet, and the second one  to a treatment group ($Y$) with a restricted diet. Again, the plot is apparently convex initially, and later is concave (see Figure \ref{example-unimodality}, on the right).

\begin{figure}[ht]\centering
\includegraphics[width=\linewidth]{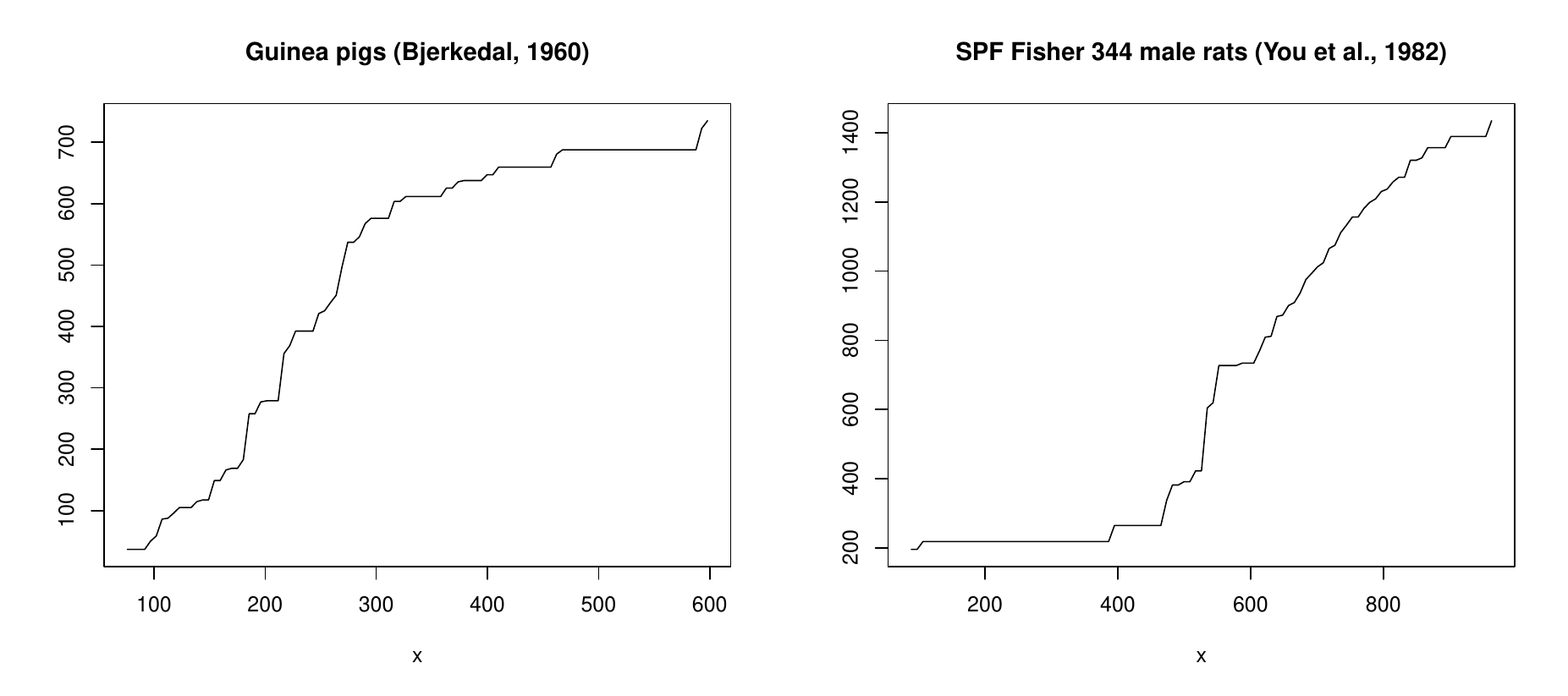}
\caption{\label{example-unimodality}\textit{Plots of the empirical transform $G_m^{-1}(F_n)$ for guinea pigs (on the left) and SPF Fisher 344 male rats (on the right).}}
\end{figure}

Observe that the property of  $G^{-1}(F)$ being initially convex and later concave is equivalent to the unimodality of the function $f(F^{-1}(p))/g(G^{-1}(p))$, therefore, the main purpose of this paper is to describe situations where the ratio of the quantile density functions has a relative extreme (in other words, the convex order does not hold), but it can still be ensured that the random variables are ordered in the star-shaped, the dmrl and/or the qmit order, as well as in the expected proportional shortfall order that will be introduced later. This work continues with some previous ones on sufficient conditions for other stochastic orders based on  quantile functions (see Arnold and Sarabia, 2018, Belzunce et. al, 2016 and 2017 and Belzunce and Mart\'{i}nez-Riquelme, 2017 for further details). Furthermore, the general situation, where the convex transform order does not hold, is also considered, that is, where the ratio of the quantile densities has at least one relative extreme.

The paper is organized as follows. In Section \ref{S2}, we present a first result where the number of modes of the quantile densities ratio is related to the number of modes of the ratio of the corresponding quantile functions. Later, this result is applied to provide sufficient conditions for the star-shaped and the expected proportional shortfall order. Similar results for the qmit and the dmrl orders are also established. These results are applied in Section \ref{S3} to compare two Tukey generalized distributed random variables in terms of some of the mentioned criteria. In addition, the main theorems are applied to establish new relationships among non-monotone and positive aging notions.

\section{Sufficient conditions for the transform orders}\label{S2}

As it was pointed out in the introduction, we deal with situations where the convex transform order does not hold. In that context, we try to find those cases where some other weaker transform order is still satisfied. Since we are interested in the situations where the convex transform order does not hold, we assume that the ratio of the quantile densities has a finite number of modes ($n\geq 1$). As a first step, we relate the number of modes of the ratio of the quantile densities to the number of modes of the ratio of the quantiles. A tool that will be applied in the proof of the next theorem is the number of sign changes of a function. We describe this next.

Given a function $f$, the sign changes of $f$ on an interval $I\subseteq \mathbb{R}$, denoted by $S^{-}(f)$, is defined as
\[
S^{-}(f)=\sup\{S^{-}(f(x_{1}),\dots,f(x_{m}))\},
\]
where $S^{-}(f(x_{1}),\dots,f(x_{m}))$ denotes the sign changes in this sequence and the supreme is taken among every ordered sequence in $I$, that means, for all $x_{1}<\cdots <x_{m}$, for all $m\in\mathbb{N}$.

Now, let us state the first result. 

\begin{The} \label{Th2.0}
Let $X$ and $Y$ be two non-negative random variables with differentiable quantile functions $F^{-1}$ and $G^{-1}$ and density functions $f$ and $g$, respectively. Let us assume that the ratio $f(F^{-1}(p))/g(G^{-1}(p))$ has n modes at $0<p_1 < p_2 < \ldots < p_n < 1$, such that the ratio is initially increasing. Let us define the function 
\begin{equation}\label{delta}
\delta (p) = F^{-1}(p)\frac{f(F^{-1}(p))}{g(G^{-1}(p))} - G^{-1}(p),
\end{equation}
for all $p\in(0,1)$. Then, the number of modes of $G^{-1}(p)/F^{-1}(p)$ is less or equal that $n+1$. In particular, the number of modes depends on the following conditions:
\begin{itemize}
  \item There is a mode at the interval  $(0,p_1)$ if, and only if, $ \lim_{p \rightarrow 0^+} \delta (p) \delta(p_1) <0$. 
  \item There is a mode at any interval $(p_i, p_{i+1})$, for $i=1,\ldots, n-1$ if, and only if, $ \delta(p_{i})\delta(p_{i+1}) <0$. 
  \item There is a mode at the interval  $(p_{n},1)$ if, and only if, $\lim_{p \rightarrow 1^-} \delta (p) \delta(p_n) <0$.
\end{itemize}
\end{The}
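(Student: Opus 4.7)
The plan is to convert the problem of counting modes of $H(p) := G^{-1}(p)/F^{-1}(p)$ into the problem of counting sign changes of the function $\delta(p)$ defined in \eqref{delta}, and then exploit a clean identity relating $\delta'$ to the derivative of $r(p) := f(F^{-1}(p))/g(G^{-1}(p))$.

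First, I would differentiate $H$ directly. Using $(F^{-1})'(p) = 1/f(F^{-1}(p))$ and $(G^{-1})'(p) = 1/g(G^{-1}(p))$, a short computation gives
\[
H'(p)=\frac{(F^{-1}(p))/g(G^{-1}(p)) - (G^{-1}(p))/f(F^{-1}(p))}{[F^{-1}(p)]^{2}}=\frac{\delta(p)}{[F^{-1}(p)]^{2}f(F^{-1}(p))}.
\]
The denominator is strictly positive on $(0,1)$ (recall $F^{-1}(p)>0$ there since $X$ is non-negative with a density), so the modes of $H$ are in one-to-one correspondence with the sign changes of $\delta$.

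Next, I would differentiate $\delta(p)=F^{-1}(p)\,r(p)-G^{-1}(p)$ using the product rule. The key algebraic miracle is that
\[
(F^{-1})'(p)\,r(p)=\frac{1}{f(F^{-1}(p))}\cdot\frac{f(F^{-1}(p))}{g(G^{-1}(p))}=\frac{1}{g(G^{-1}(p))}=(G^{-1})'(p),
\]
so these terms cancel, leaving the clean identity
\[
\delta'(p)=F^{-1}(p)\,r'(p).
\]
Since $F^{-1}(p)>0$ on $(0,1)$, this shows that $\delta$ inherits the monotonicity pattern of $r$ exactly: $\delta$ is increasing on $(0,p_{1})$, decreasing on $(p_{1},p_{2})$, increasing on $(p_{2},p_{3})$, and so on, with its critical points occurring precisely at $p_{1},\dots,p_{n}$.

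Finally, the points $p_{1}<\dots<p_{n}$ partition $(0,1)$ into $n+1$ subintervals on which $\delta$ is strictly monotonic, hence continuous and monotonic, so by the intermediate value theorem $\delta$ has at most one sign change on each subinterval. This yields the upper bound of $n+1$ modes of $H$. On each subinterval a continuous monotonic function crosses zero if and only if its values at the two endpoints have strictly opposite signs, which produces exactly the three stated equivalences, using one-sided limits at the boundary endpoints $0^{+}$ and $1^{-}$. I do not anticipate a genuine obstacle; the only delicate point is the cancellation that produces $\delta'(p)=F^{-1}(p)\,r'(p)$, which is really the heart of the result because it transfers the mode structure of $r$ to the sign structure of $\delta$, and through the formula for $H'$ ultimately to the mode count of $G^{-1}/F^{-1}$.
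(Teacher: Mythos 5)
Your proposal is correct in substance and follows the same global strategy as the paper: both reduce the mode count of $H(p)=G^{-1}(p)/F^{-1}(p)$ to the sign changes of $\delta$, show that $\delta$ inherits the monotonicity pattern of the ratio $r(p)=f(F^{-1}(p))/g(G^{-1}(p))$ with critical points exactly at $p_1,\dots,p_n$, and then read off at most one sign change per monotone piece via endpoint signs. Where you genuinely differ is in the central monotonicity-transfer step. The paper never differentiates $r$: it writes $F^{-1}(p)=\int_0^p \frac{dq}{f(F^{-1}(q))}+l_X$, decomposes $\delta(p)$ into an integral term plus the boundary term $l_X\,r(p)-l_Y$, and establishes monotonicity of $\delta$ on each piece by a two-step comparison inequality that uses only the monotonicity of $r$ (and $l_X\geq 0$). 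You instead obtain the pointwise identity $\delta'(p)=F^{-1}(p)\,r'(p)$ from the product-rule cancellation $(F^{-1})'(p)\,r(p)=(G^{-1})'(p)$, which is slicker, makes the mechanism transparent, and subsumes the role of $l_X\geq 0$ in the single fact $F^{-1}(p)>0$.

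The price of your shortcut is a regularity caveat you should flag: the theorem assumes only that the quantile functions are differentiable and that $r$ is piecewise monotone with $n$ modes; it does not assume $r$ is differentiable (indeed $(F^{-1})'=1/f(F^{-1})$ is a derivative, hence Darboux, but need not even be continuous). So the identity $\delta'=F^{-1}\,r'$ is not available under the stated hypotheses, and your invocation of the intermediate value theorem for $\delta$ is likewise not guaranteed. Both issues are repairable: for $p<p'$ in a piece where $r$ is increasing one has $\delta(p')-\delta(p)=F^{-1}(p)\bigl(r(p')-r(p)\bigr)+\int_p^{p'}\frac{r(p')-r(q)}{f(F^{-1}(q))}\,dq\geq 0$, which recovers the monotonicity of $\delta$ without differentiating $r$ (this is essentially the paper's comparison argument), and for the conclusion you do not need $\delta$ to vanish --- a monotone $\delta$ changes sign on a piece if and only if its endpoint values (or one-sided limits at $0^+$, $1^-$) have strictly opposite signs, and a sign change of $H'$, even across a jump, still produces a mode of $H$. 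With these two repairs your argument is complete and matches the theorem's generality.
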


\begin{proof} In order to set the number of modes of the ratio $G^{-1}(p)/F^{-1}(p)$,  we seek for the sign changes of its derivative, which is equivalent to study
\[
S^{-}(\delta ), \text{ on the interval } (0,1).
\]
Now, observe that $\delta$ can be rewritten as
\[
\delta (p) = \int_{0}^{p} \left[\frac{1}{f(F^{-1}(q))} \frac{f(F^{-1}(p))}{g(G^{-1}(p))} -\frac{1}{g(G^{-1}(q))} \right]dq + l_X\frac{f(F^{-1}(p))}{g(G^{-1}(p))} -l_Y,
\]
where the equivalence follows taking into account that $F^{-1}(p)=\int_{0}^{p} \frac{1}{f(F^{-1}(q))}dq + l_X$ and analogously for $G^{-1}(p)$.  On the one hand, it is easy to see that 
\[l_X\frac{f(F^{-1}(p))}{g(G^{-1}(p))} -l_Y
\]
is increasing (decreasing) over the intervals where $f(F^{-1}(p))/g(G^{-1}(p))$ is increasing (decreasing), since $l_X \geq 0$. On the other hand, if we consider $ p< p' $ belonging to an interval where the ratio of the quantile densities is decreasing (increasing), we have that
\begin{multline*}
\int_0^{p} \left(\frac{f(F^{-1}(p))}{g(G^{-1}(p))}\frac{1}{f(F^{-1}(q))} - \frac{1}{g(G^{-1}(q))} \right)dq \\ \ge (\le) \int_0^{p} \left(\frac{f(F^{-1}(p'))}{g(G^{-1}(p'))}\frac{1}{f(F^{-1}(q))} - \frac{1}{g(G^{-1}(q))} \right)dq \\ \ge (\le) \int_0^{p'} \left(\frac{f(F^{-1}(p'))}{g(G^{-1}(p'))}\frac{1}{f(F^{-1}(q))} - \frac{1}{g(G^{-1}(q))} \right)dq, 
\end{multline*}
where both inequalities follow by the decrease (increase) of $f(F^{-1}(p))/g(G^{-1}(p))$. Therefore,  $\delta (p)$ is decreasing (increasing) over the intervals where $f(F^{-1}(p))/g(G^{-1}(p))$ is decreasing (increasing) and the infimums of $\delta$ are $0, p_2, p_4, \ldots, p_{2m}$, if $n=2m$, and $0, p_2, p_4, \ldots, p_{2m}, 1$ when $n=2m+1$. The result trivially follows taking into account that the conditions appearing in the statement are equivalent to ask $\delta $ for being negative at its infimums and positive at its supremums, which trivially implies a sign change or, equivalently, a mode of the ratio $G^{-1}(p)/F^{-1}(p)$.
\end{proof}

Whenever two random variables can not be ordered according to the convex criterion, then the ratio of the quantile densities has to have $n\geq 1$ modes. According to the previous theorem, if $n$ is finite, we know exactly the behavior of the ratio of the quantiles in such situations. In particular,  if at least one of the conditions stated in Theorem \ref{Th2.0} is satisfied, then the ratio of the quantiles has one or more modes. In this case, by applying some of the results provided by Belzunce et al. (2017), we can directly conclude that the only transform order which can hold is the expected proportional shortfall (see Definition \ref{eps}). Belzunce et al. provide the situations where the ratio of the quantiles has $n$ modes and the expected proportional shortfall order is satisfied.  In contrast with that situation, the star shaped order is trivially satisfied if $ \delta$ is non-negative at its infimums, so the following corollary follows as a direct consequence of the previous theorem.

\begin{Cor}
Let $X$ and $Y$ be two non-negative random variables with differentiable quantile functions $F^{-1}$ and $G^{-1}$ and density functions $f$ and $g$, respectively. Let us assume that the ratio $f(F^{-1}(p))/g(G^{-1}(p))$ has n modes at $0<p_1 < p_2 < \ldots < p_n < 1$, such that the ratio is initially increasing. Let us consider the function $\delta$ defined in \eqref{delta}. Then, $X\le_{\star} Y$ if, and only if, one of the following sets of conditions is satisfied
\begin{itemize}
\item $n=2m$ and $ \lim_{p \rightarrow 0^+} \delta (p) \geq  0$ and  $ \delta (p_{2\,i}) \geq  0$, for $i=1,\ldots, m$. 
\item $n=2m+1$ and $ \lim_{p \rightarrow 0^+} \delta (p) \geq  0$, $ \delta (p_{2\,i}) \geq 0$, for $i=1,\ldots, m$ and $ \lim_{p \rightarrow 1^-} \delta (p)\geq 0$.
\end{itemize}
Similarly, $X\ge_{\star}Y$ if, and only if, one the following sets of conditions is satisfied
\begin{itemize}
\item $n=2m$ and $ \lim_{p \rightarrow 1^-} \delta (p) \leq  0$ and  $ \delta (p_{2\,i-1}) \leq  0$, for $i=1,\ldots, m$. 
\item $n=2m+1$ and $ \delta (p_{2\,i-1}) \leq 0$, for $i=1,\ldots, m+1$.
\end{itemize}
\end{Cor}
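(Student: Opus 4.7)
The plan is to reduce the characterisation of $X\le_{\star}Y$ (resp.\ $X\ge_{\star}Y$) to a sign condition on the function $\delta$ introduced in \eqref{delta}, and then to read off, from the monotonicity structure already established in the proof of Theorem~\ref{Th2.0}, exactly where the infimums (resp.\ supremums) of $\delta$ are located.

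First I would observe that differentiating $G^{-1}(p)/F^{-1}(p)$ and using $(F^{-1})'(p)=1/f(F^{-1}(p))$, $(G^{-1})'(p)=1/g(G^{-1}(p))$ gives
\[
\frac{d}{dp}\frac{G^{-1}(p)}{F^{-1}(p)} = \frac{1}{(F^{-1}(p))^{2}\,g(G^{-1}(p))}\,\delta(p),
\]
so that $X\le_{\star}Y$ is equivalent to $\delta(p)\ge 0$ for all $p\in(0,1)$, and $X\ge_{\star}Y$ is equivalent to $\delta(p)\le 0$ for all $p\in(0,1)$. Thus the star-shaped order holds in one direction if and only if $\delta$ is non-negative (non-positive) at every infimum (supremum) in $(0,1)$.

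Next I would invoke the key monotonicity fact already isolated inside the proof of Theorem~\ref{Th2.0}: $\delta$ is increasing on every subinterval where $f(F^{-1}(p))/g(G^{-1}(p))$ is increasing, and decreasing where it is decreasing. Since by hypothesis the quantile density ratio is initially increasing and has modes at $p_1<p_2<\cdots<p_n$, the modes are alternately maxima and minima of that ratio, starting with a maximum at $p_1$. Consequently $\delta$ increases on $(0,p_1)$, decreases on $(p_1,p_2)$, increases on $(p_2,p_3)$, and so on. A parity check then identifies the candidate extremes of $\delta$ on $(0,1)$: if $n=2m$, the infimums of $\delta$ occur at $0^+$ and at $p_2,p_4,\dots,p_{2m}$, while the supremums occur at $p_1,p_3,\dots,p_{2m-1}$ and at $1^-$; if $n=2m+1$, the infimums occur at $0^+$, at $p_2,p_4,\dots,p_{2m}$, and at $1^-$, while the supremums occur at $p_1,p_3,\dots,p_{2m+1}$.

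Finally I would conclude by combining the two previous steps. For $X\le_{\star}Y$ one imposes $\delta\ge 0$ at each of its infimums, which yields exactly the two bulleted sets of conditions in the first part of the statement (split by the parity of $n$). For $X\ge_{\star}Y$ one instead imposes $\delta\le 0$ at each of its supremums, yielding the two bulleted sets of conditions in the second part. I do not expect any serious obstacle here: the content of the corollary is essentially the bookkeeping of the infimum/supremum locations of $\delta$ uncovered in the proof of Theorem~\ref{Th2.0}, and the main care is simply to keep the parity of $n$ and the initial direction of the quantile density ratio straight when listing which modes correspond to minima of $\delta$ and which to maxima.
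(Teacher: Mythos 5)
Your proposal is correct and takes essentially the same route as the paper: the corollary is read off from the facts isolated in the proof of Theorem~\ref{Th2.0} --- that the sign of the derivative of $G^{-1}(p)/F^{-1}(p)$ is that of $\delta(p)$, and that $\delta$ is increasing (decreasing) exactly where the quantile density ratio is, so one only needs $\delta\ge 0$ at its infimums (resp.\ $\delta\le 0$ at its supremums), located by the same parity bookkeeping you give. One cosmetic slip: the derivative equals $\delta(p)/\bigl((F^{-1}(p))^{2}\,f(F^{-1}(p))\bigr)$, not with $g(G^{-1}(p))$ in the prefactor, but since both densities are positive this does not affect the sign argument.
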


Despite Theorem \ref{Th2.0} being interesting from a theoretical point of view, the conditions on the function $\delta$ are not usually easy to deal with and, consequently, this result is not so easy to apply in practice. Taking into account that downside, we consider the particular case where the ratio of the quantile densities has just one mode. This choice has a double justification. First, the conditions arising in that case are easy to check. Secondly, this situation occurs many times in practice as we have shown in the introduction. Our aim is to provide simpler sets of sufficient conditions for the expected proportional shortfall, the star-shaped, the qmit and the dmrl orders in such situations where the ratio of the quantile densities has just one mode. These sets of sufficient conditions are based on a former result in the same spirit of Theorem \ref{Th2.0}.

\begin{The} \label{Th2.1}
Let $X$ and $Y$ be two non-negative random variables with differentiable quantile functions $F^{-1}$ and $G^{-1}$ and density functions $f$ and $g$, respectively. Let us assume that there exists a point $p^* \in (0,1)$, such that $f(F^{-1}(p))/g(G^{-1}(p))$ is increasing over $\left(0,p^*\right)$ and decreasing over $\left(p^*,1\right)$. Let us consider the function $\delta$ defined in \eqref{delta}. Then, the behaviour of the ratio of the quantile functions is described by one, and just one, of the following cases:
\begin{enumerate}

\item If $\lim_{p \rightarrow 0^+} \delta(p) \geq 0$ and $\lim_{p \rightarrow 1^-} \delta(p) \geq 0$, then
$G^{-1}(p)/F^{-1}(p)$ is increasing over $\left(0,1\right)$.

\item If\, $\lim_{p \rightarrow 0^+} \delta(p) \geq 0$ and\, $\lim_{p \rightarrow 1^-} \delta(p) < 0$, then
there exist $p_1$ such that $G^{-1}(p)/F^{-1}(p)$ is increasing over $\left(0,p_1\right)$ and decreasing over $\left(p_1,1\right)$.

\item If $\lim_{p \rightarrow 0^+} \delta(p) < 0$ and $\lim_{p \rightarrow 1^-} \delta(p) \geq 0$, then
there exist $p_1$ such that $G^{-1}(p)/F^{-1}(p)$ is decreasing over $\left(0,p_1\right)$ and increasing over $\left(p_1,1\right)$.

\item If $\lim_{p \rightarrow 0^+} \delta(p) < 0$ and $\lim_{p \rightarrow 1^-} \delta(p) < 0$, then
	\begin{enumerate}
	
	\item If $\delta(p^*) \leq 0$, then $G^{-1}(p)/F^{-1}(p)$ is decreasing over $\left(0,1\right)$.
	\item If $\delta(p^*) > 0$, then there exist $p_1,p_2$ such that $G^{-1}(p)/F^{-1}(p)$ is decreasing over $\left(0,p_1\right)$, increasing over $\left(p_1,p_2\right)$ and decreasing over $\left(p_2,1\right)$.
	
	\end{enumerate}

\end{enumerate}
\end{The}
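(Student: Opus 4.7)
The plan is to exploit the same connection between $\delta(p)$ and the derivative of $G^{-1}(p)/F^{-1}(p)$ that already underlies Theorem~\ref{Th2.0}, and then to turn the single-mode hypothesis on $f(F^{-1}(p))/g(G^{-1}(p))$ into a corresponding one-mode description of $\delta$ itself. This reduces the theorem to four cases depending on the signs of $\delta$ at the two endpoints (and, in the last case, at $p^*$).

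First I would verify the basic identity
\[
\frac{d}{dp}\left(\frac{G^{-1}(p)}{F^{-1}(p)}\right)=\frac{1}{[F^{-1}(p)]^{2}}\left(\frac{F^{-1}(p)}{g(G^{-1}(p))}-\frac{G^{-1}(p)}{f(F^{-1}(p))}\right),
\]
so that after multiplication by the positive factor $f(F^{-1}(p))\,[F^{-1}(p)]^{2}$ the sign of the derivative of the quantile ratio coincides with the sign of $\delta(p)$. Thus the modes of $G^{-1}(p)/F^{-1}(p)$ are precisely the sign changes of $\delta$, exactly as in the proof of Theorem~\ref{Th2.0}.

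Next I would transfer the single-mode hypothesis on the quantile density ratio to $\delta$. By the argument given in the proof of Theorem~\ref{Th2.0} (applied with $n=1$), $\delta$ is increasing on every subinterval where $f(F^{-1}(p))/g(G^{-1}(p))$ is increasing and decreasing on every subinterval where it is decreasing. Therefore $\delta$ itself is strictly unimodal with its unique maximum at $p^{*}$, increasing on $(0,p^{*})$ and decreasing on $(p^{*},1)$. This is really the crux of the argument; once it is in place, the rest is bookkeeping.

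Finally I would run the four-way case analysis by inspecting where $\delta$ can change sign. In case~1 the two endpoint limits are non-negative, so by unimodality $\delta\ge 0$ everywhere, giving monotone increase of $G^{-1}/F^{-1}$. In case~2, $\delta$ starts non-negative, rises and then descends to a negative value, producing exactly one zero $p_{1}\in(p^{*},1)$; symmetrically in case~3 the unique zero $p_{1}$ lies in $(0,p^{*})$. In case~4 both endpoint limits are negative, so $\delta$ has either no zero (when $\delta(p^{*})\le 0$, subcase~4a) or exactly two zeros $p_{1}\in(0,p^{*})$ and $p_{2}\in(p^{*},1)$ (when $\delta(p^{*})>0$, subcase~4b); each sign pattern of $\delta$ translates into the claimed monotonicity pattern of $G^{-1}(p)/F^{-1}(p)$. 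The only mild obstacle is being careful that the endpoint limits of $\delta$ exist and may be infinite; but the statements of the four cases only depend on the sign (or non-positivity) of these limits, so extended-real values cause no issue.
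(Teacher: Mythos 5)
Your proof is correct and takes essentially the same route as the paper: you reduce the statement to a sign analysis of $\delta$, using the monotonicity-transfer argument from the proof of Theorem \ref{Th2.0} (where non-negativity of $l_X$ enters) to conclude that $\delta$ is unimodal with maximum at $p^*$, and then the four-case bookkeeping reproduces the paper's case discussion, which is phrased there as $S^{-}(\delta)\le 2$ with sign sequence $(-,+,-)$ in case of equality. Your only addition is making explicit the derivative identity linking the sign of $\delta(p)$ to the monotonicity of $G^{-1}(p)/F^{-1}(p)$, which the paper leaves implicit.
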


\begin{proof} 
According to the notation followed along the proof of Theorem \ref{Th2.0}, the following condition holds under the assumptions
\[
S^{-}(\delta) \leq 2, \text{  on the interval } (0,1),
\]
with sign sequence $(-,+,-)$, when the equality holds.  Let us discuss now the different cases considered in the statement of the theorem.

\begin{enumerate}

\item If $\lim_{p\rightarrow 0^{+}} \delta(p) \geq 0$ and $\lim_{p\rightarrow 1^{-}} \delta(p) \geq 0$ and taking into account that the functions $\delta(p)$ and $f(F^{-1}(p))/g(G^{-1}(p))$ share the same type of monotonicity, then we have that $S^{-}(\delta)=0, \text{  on the interval } (0,1),$ with the sign $(+)$ or, equivalently, the ratio $G^{-1}(p)/F^{-1}(p)$ is increasing over $(0,1)$ and the proof of Case 1 is concluded. 

\item If $\lim_{p\rightarrow 0^{+}} \delta(p) \geq 0$ and $\lim_{p\rightarrow 1^{-}} \delta(p) < 0$, then $S^{-}(\delta)=1, \text{   on the interval } (0,1),$ with the sign sequence $(+,-)$ or, equivalently, there exist $p_1$ such that the ratio $G^{-1}(p)/F^{-1}(p)$ is increasing over $(0,p_1)$ and decreasing over $(p_1,1)$ and the proof of Case 2 is concluded. 

\item If $\lim_{p\rightarrow 0^{+}} \delta(p) < 0$ and $\lim_{p\rightarrow 1^{-}} \delta(p) \geq 0$, then $S^{-}(\delta)=1, \text{   on the interval } (0,1),$ with the sign sequence $(-,+)$ or, equivalently, there exist $p_1$ such that the ratio $G^{-1}(p)/F^{-1}(p)$ is decreasing over $(0,p_1)$ and increasing over $(p_1,1)$ and the proof of Case 3 is concluded. 

\item Finally, if $\lim_{p\rightarrow 0^{+}} \delta(p) < 0$ and $\lim_{p\rightarrow 1^{-}} \delta(p) < 0$, we have to distinguish two cases. 

\begin{enumerate}

\item If $\delta(p^*) \leq 0$, then $S^{-}(\delta)=0, \text{   on the interval } (0,1),$ with the sign $(-)$ or, equivalently, the ratio $G^{-1}(p)/F^{-1}(p)$ is decreasing over $(0,1)$.

\item Otherwise, if  $\delta(p^*) > 0$, then $S^{-}(\delta)=2, \text{   on the interval } (0,1),$ with the sign sequence $(-,+,-)$ or, equivalently, there exist $p_1,p_2\in(0,1)$ such that the ratio $G^{-1}(p)/F^{-1}(p)$ is decreasing over $\left(0,p_1\right)$, increasing over $\left(p_1,p_2\right)$ and decreasing over $\left(p_2,1\right)$ and the proof of Case 4 is concluded. 

\end{enumerate}

\end{enumerate}
\end{proof}

The previous theorem provides a full discussion about the behaviour of the ratio of the quantile functions when there exists a point $p^* \in (0,1)$, such that $f(F^{-1}(p))/g(G^{-1}(p))$ is increasing over $\left(0,p^*\right)$ and decreasing over $\left(p^*,1\right)$. 

We observe that an important point in order to apply the previous theorem is the limit of $\delta(p)$ as $p \rightarrow 1^-$ and $p \rightarrow 0^+$. These limits have to be computed for every particular case. Next, we provide some general remarks about these limits.

\begin{Rem}According to Parzen (1979) and Shuster (9184), the quantile density function of a random variable $X$ has the following representation
\[
f(F^{-1}(p))=L_F(p) (1-p)^{\alpha_F},
\] 
as $p \rightarrow 1^-$, where $L_F$ is a slowly varying function as $p\rightarrow 1^-$ and $\alpha_F$ is a positive constant. We recall that $L_F(p)$ is slowly varying as $p\rightarrow 1^-$ if $\lim_{p\rightarrow 1^-} L_F(\lambda p)/L_F(p)=1$ for every $\lambda >0$. 

Taking this into account, we have that 
\[
\lim_{p\rightarrow 1^-}\frac{f(F^{-1}(p))}{g(G^{-1}(p))}= \lim_{p\rightarrow 1^-}\frac{L_F(p)}{L_G(p)}(1-p)^{\alpha_F-\alpha_G},
\]
therefore, the previous limit exists and would be either 0, or a positive constant $c$, or $+\infty$. 
 
From the previous remark, and depending on the upper left extremes of the supports of $X$ and $Y$, we can make the following considerations on the limit of $\delta(p)$ as $p\rightarrow 1^-$. 

\textbf{Case 1}: $\lim_{p\rightarrow 1^-} G^{-1}(p)=u_Y < +\infty$ and $\lim_{p\rightarrow 1^-} F^{-1}(p)= u_X < +\infty$. In this case we have that 
\[
\lim_{p\rightarrow 1^-} \delta(p)= \left\{
\begin{array}{ll}
u_Xc - u_Y & \text{if }\lim_{p\rightarrow 1^-}\frac{f(F^{-1}(p))}{g(G^{-1}(p))}=c \\
           &                       \\     
-u_Y & \text{if }\lim_{p\rightarrow 1^-}\frac{f(F^{-1}(p))}{g(G^{-1}(p))}=0 \\
           &                       \\     
+\infty & \text{if }\lim_{p\rightarrow 1^-}\frac{f(F^{-1}(p))}{g(G^{-1}(p))}=+\infty \\
\end{array}
\right.
\]

\textbf{Case 2}: $\lim_{p\rightarrow 1^-} G^{-1}(p)=u_Y < +\infty$ and $\lim_{p\rightarrow 1^-} F^{-1}(p)= +\infty$. In this case we have that 
\[
\lim_{p\rightarrow 1^-} \delta(p)= \left\{
\begin{array}{ll}
+\infty & \text{if }\lim_{p\rightarrow 1^-}\frac{f(F^{-1}(p))}{g(G^{-1}(p))}=c \\
           &                       \\     
\text{Indeterminate} & \text{if }\lim_{p\rightarrow 1^-}\frac{f(F^{-1}(p))}{g(G^{-1}(p))}=0 \\
           &                       \\     
+\infty & \text{if }\lim_{p\rightarrow 1^-}\frac{f(F^{-1}(p))}{g(G^{-1}(p))}=+\infty \\
\end{array}
\right.
\]

\textbf{Case 3}: $\lim_{p\rightarrow 1^-} G^{-1}(p)=+\infty$ and $\lim_{p\rightarrow 1^-} F^{-1}(p)=u_X< +\infty$. In this case we have that 
\[
\lim_{p\rightarrow 1^-} \delta(p)= \left\{
\begin{array}{ll}
-\infty & \text{if }\lim_{p\rightarrow 1^-}\frac{f(F^{-1}(p))}{g(G^{-1}(p))}=c \\
           &                       \\     
-\infty & \text{if }\lim_{p\rightarrow 1^-}\frac{f(F^{-1}(p))}{g(G^{-1}(p))}=0 \\
           &                       \\     
\text{Indeterminate} & \text{if }\lim_{p\rightarrow 1^-}\frac{f(F^{-1}(p))}{g(G^{-1}(p))}=+\infty \\
\end{array}
\right.
\]
 
\textbf{Case 4}: $\lim_{p\rightarrow 1^-} G^{-1}(p)=+\infty$ and $\lim_{p\rightarrow 1^-} F^{-1}(p)=+\infty$. In any case, the limit is indeterminate.
 
Similar comments can be made when $p \rightarrow 0^+$, see Alzaid and Al-Osh (1989) for more details. In particular, we have the following cases: 

\textbf{Case 1'}: $\lim_{p\rightarrow 0^+} F^{-1}(p)= l_X > 0$. In this case we have that 
\[
\lim_{p\rightarrow 0^+} \delta(p)= \left\{
\begin{array}{ll}
l_Xc' - l_Y & \text{if }\lim_{p\rightarrow 0^+}\frac{f(F^{-1}(p))}{g(G^{-1}(p))}=c' \\
           &                       \\     
-l_Y & \text{if }\lim_{p\rightarrow 0^+}\frac{f(F^{-1}(p))}{g(G^{-1}(p))}=0 \\
           &                       \\     
+\infty & \text{if }\lim_{p\rightarrow 0^+}\frac{f(F^{-1}(p))}{g(G^{-1}(p))}=+\infty \\
\end{array}
\right.
\]

\textbf{Case 2'}: $\lim_{p\rightarrow 0^+} F^{-1}(p)=  0$. In this case we have that 
\[
\lim_{p\rightarrow 0^+} \delta(p)= \left\{
\begin{array}{ll}
 - l_Y & \text{if }\lim_{p\rightarrow 0^+}\frac{f(F^{-1}(p))}{g(G^{-1}(p))}=c' \\
            &                       \\     
-l_Y & \text{if }\lim_{p\rightarrow 0^+}\frac{f(F^{-1}(p))}{g(G^{-1}(p))}=0 \\
           &                       \\     
\text{Indeterminate} & \text{if }\lim_{p\rightarrow 0^+}\frac{f(F^{-1}(p))}{g(G^{-1}(p))}=+\infty \\
\end{array}
\right.
\]
\end{Rem}

Now, two sets of sufficient conditions for the star-shaped order are provided by applying Case 1 and Case 4a in Theorem \ref{Th2.1}.

\begin{The}\label{Th2.2} Let $X$ and $Y$ be two non-negative random variables, with quantile and density functions $F^{-1}$, $f$ and $G^{-1}$, $g$, respectively, where the quantile functions are differentiable. Let us assume that there exists a point $p^* \in (0,1)$, such that $f(F^{-1}(p))/g(G^{-1}(p))$ is increasing over $\left(0,p^*\right)$ and decreasing over $\left(p^*,1\right)$. 

\begin{enumerate}

\item $X\le_{\star}Y$ if, and only if, $\lim_{p \rightarrow 0^+} \delta(p) \geq 0$ and $\lim_{p \rightarrow 1^-} \delta(p) \geq 0$. 

\item $X\ge_{\star}Y$ if, and only if, $\delta(p^*) \leq 0$.

\end{enumerate}
\end{The}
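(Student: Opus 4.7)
The plan is to reduce both parts of the theorem to a pointwise sign condition on the function $\delta$, and then exploit its unimodality. First, using $(F^{-1})'(p)=1/f(F^{-1}(p))$ and the analogous identity for $G^{-1}$, a direct differentiation gives
\[
\frac{d}{dp}\!\left(\frac{G^{-1}(p)}{F^{-1}(p)}\right)=\frac{\delta(p)}{\bigl(F^{-1}(p)\bigr)^{2}f(F^{-1}(p))},
\]
so the sign of this derivative coincides with the sign of $\delta(p)$. Therefore $X\le_{\star}Y$ is equivalent to $\delta(p)\ge 0$ for all $p\in(0,1)$, and $X\ge_{\star}Y$ is equivalent to $\delta(p)\le 0$ for all $p\in(0,1)$.

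Next, I would invoke the observation already established inside the proof of Theorem \ref{Th2.0}: $\delta$ inherits the monotonicity pattern of the quantile density ratio. Under the unimodality hypothesis of the present statement, this forces $\delta$ to be increasing on $(0,p^{*})$ and decreasing on $(p^{*},1)$. Consequently $p^{*}$ is the unique global maximum of $\delta$ on $(0,1)$, while the infimum of $\delta$ over $(0,1)$ is realized only in the limits $p\to 0^{+}$ and $p\to 1^{-}$, and in fact equals $\min\{\lim_{p\to 0^{+}}\delta(p),\lim_{p\to 1^{-}}\delta(p)\}$.

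With these two ingredients, both equivalences become immediate. For part (1), $\delta\ge 0$ throughout $(0,1)$ is equivalent to $\inf_{(0,1)}\delta\ge 0$, which is exactly the pair of limit conditions in the statement. For part (2), $\delta\le 0$ throughout $(0,1)$ is equivalent to $\max_{(0,1)}\delta=\delta(p^{*})\le 0$; note that $\delta(p^{*})\le 0$ automatically forces both boundary limits of $\delta$ to be non-positive as well, so no extra conditions are needed. Equivalently, one may simply read part (1) off as Case 1 of Theorem \ref{Th2.1} and part (2) as Case 4(a) of the same theorem, after this observation on the location of the extrema of $\delta$.

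I do not anticipate a substantive obstacle. The only care needed is to check the factorization of $\frac{d}{dp}(G^{-1}/F^{-1})$ in terms of $\delta$, and to be explicit about the fact that under unimodality the supremum of $\delta$ is interior (at $p^{*}$) while the infimum is a boundary limit, so that the pointwise inequalities $\delta\ge 0$ and $\delta\le 0$ translate cleanly into boundary-only and interior-only conditions, respectively.
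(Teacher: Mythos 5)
Your proposal is correct and follows essentially the paper's own route: the paper obtains Theorem \ref{Th2.2} directly from Cases 1 and 4(a) of Theorem \ref{Th2.1}, whose proof rests on exactly the two ingredients you use, namely that the sign of $\frac{d}{dp}\bigl(G^{-1}(p)/F^{-1}(p)\bigr)$ coincides with that of $\delta(p)$ and that $\delta$ inherits the monotonicity of $f(F^{-1}(p))/g(G^{-1}(p))$ as established in the proof of Theorem \ref{Th2.0}. Your explicit remarks that under unimodality the maximum of $\delta$ is attained at $p^*$ while its infimum is realized only as a boundary limit are just a direct restatement of those cases, so the argument matches the paper's.
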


Additionally, it is possible to provide results for another stochastic order based on the quantile function which was introduced by Belzunce et. al (2012). 
\begin{Def}\label{eps}
Given two random variables $X$ and $Y$ with distribution functions $F$ and $G$, respectively, it is said that $X$ is smaller than $Y$ in the \textbf{expected proportional shortfall order}, denoted by $X\le_{ps}Y$, if 
\[
\frac{\int_{F^{-1}(p)}^{+\infty}\overline F(x)dx}{F^{-1}(p)}\le \frac{\int_{G^{-1}(p)}^{+\infty}\overline G(x)dx}{G^{-1}(p)},\text{ for all }p\in(0,1).
\]
\end{Def}
Recall that the function
\[
EPS_X(p)=\frac{\int_{F^{-1}(p)}^{+\infty}\overline F(x)dx}{F^{-1}(p)},\text{ for }p\in(0,1),
\]
is known in risk theory as the expected proportional shortfall. The expected proportional shortfall  can be considered as a risk measure which does not depend on the base currency, and can also be considered as a measure of relative deprivation. Therefore, the eps order can be used, on the one hand, to compare risks and, on the other hand, to compare income distributions according to their concentration or inequality. According to \eqref{cuadro} and well-known results for the ps order (see Belzunce et. al, 2012), we have
\[
\begin{tabular}{c c c c c  }
    &    & $X\leq_{\textup{qmit}}Y$ &  $\Rightarrow$ &  $X\leq_{\textup{*}}Y$  \\ 
    & \rotatebox{45}{$\Rightarrow$} &   &  &  $\Downarrow$ \\ 
$X\leq_{\textup{c}}Y$ &   &   &   & $X\leq_{\textup{ps}}Y$  \\
    & \rotatebox{45}{$\Downarrow$} &     &  &  $\Downarrow$ \\ 
    &    & $X\leq_{\textup{dmrl}}Y$ &  $\Rightarrow$ &    $X\leq_{\textup{nbue}}Y$.  \\ 
\end{tabular}
\]

Recently, Belzunce and Mart\'{i}nez-Riquelme (2017) describe sufficient conditions where the ps order holds when the ratio of the quantile functions is non-monotone. Since Theorem \ref{Th2.1} deals with the non monotonicity of the ratio of  quantile functions, it can be jointly applied with some of the results provided by Belzunce and Mart\'{i}nez-Riquelme (2017) to establish sufficient conditions for the ps order when the quantile densities ratio is unimodal. Let us state the sufficient conditions for the ps order in the same terms as in Theorem \ref{Th2.1}.

\begin{The}\label{Thps} Let $X$ and $Y$ be two non-negative random variables, with quantile and density functions $F^{-1}$, $f$ and $G^{-1}$, $g$, respectively, where the quantile functions are differentiable. Let us assume that there exists a point $p^* \in (0,1)$, such that $f(F^{-1}(p))/g(G^{-1}(p))$ is increasing over $\left(0,p^*\right)$ and decreasing over $\left(p^*,1\right)$ and define the function
\begin{equation}\label{deltaps}
\delta_{ps} (p) =\frac{F^{-1}(p)}{E[X]} - \frac{G^{-1}(p)}{E[Y]}.
\end{equation}

\begin{enumerate}
\item $X\leq_{\textup{ps}}Y$ if, and only if, $\lim_{p \rightarrow 0^{+}} \delta(p) < 0$, $\lim_{p \rightarrow 1^{-}} \delta(p) \geq 0$ and $\lim_{p \rightarrow 0^{+}} \delta_{ps}(p) \geq 0$.

\item $X\geq_{\textup{ps}}Y$ if, and only if, one of the following conditions hold
\begin{enumerate}
	\item $\lim_{p \rightarrow 0^{+}} \delta(p) \geq 0$, $\lim_{p \rightarrow 1^{-}} \delta(p) < 0$ and $\lim_{p \rightarrow 0^{+}} \delta_{ps}(p) \leq 0$.
  \item $\lim_{p \rightarrow 0^{+}} \delta(p) < 0$, $\lim_{p \rightarrow 1^{-}} \delta(p) < 0$, $\delta(p^*) > 0$, $\lim_{p \rightarrow 0^{+}} \delta_{ps}(p) \leq 0$ and \newline $EPS_{X}(p_1) \geq EPS_{Y}(p_1)$, where $p_1$ denotes the minimum of the ratio of the quantiles functions.
\end{enumerate}
\end{enumerate}

\end{The}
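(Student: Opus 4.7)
The plan is to reduce the theorem to a combination of Theorem \ref{Th2.1}, which translates each sign configuration of $\delta$ into a precise monotonicity pattern for $G^{-1}(p)/F^{-1}(p)$, together with the sufficient conditions established by Belzunce and Martínez-Riquelme (2017), where the ps order is characterized assuming that $G^{-1}/F^{-1}$ has a prescribed non-monotone shape (single minimum, single maximum, or minimum-then-maximum). Under the standing hypothesis that $f(F^{-1}(p))/g(G^{-1}(p))$ is unimodal with mode $p^*$, every admissible collection of sign conditions on $\delta$ forces the ratio of quantile functions to fit into one of the shapes handled by that earlier work, so the extra conditions on $\delta_{ps}$ play the role of the boundary/intermediate value conditions demanded there.

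For Part 1, I would first observe that the hypotheses $\lim_{p\to 0^+}\delta(p) < 0$ and $\lim_{p\to 1^-}\delta(p) \geq 0$ correspond exactly to Case 3 of Theorem \ref{Th2.1}, so $G^{-1}(p)/F^{-1}(p)$ is decreasing on $(0,p_1)$ and increasing on $(p_1,1)$ for some $p_1\in(0,1)$. For this single-minimum shape, the Belzunce–Martínez-Riquelme criterion says that $X\leq_{ps}Y$ holds precisely when the normalized quantile functions satisfy $F^{-1}(p)/E[X]\geq G^{-1}(p)/E[Y]$ near $p=0^+$, which is exactly $\lim_{p\to 0^+}\delta_{ps}(p)\geq 0$; combining these two ingredients yields the stated equivalence.

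For Part 2(a), the sign hypotheses on $\delta$ place us in Case 2 of Theorem \ref{Th2.1}, so $G^{-1}/F^{-1}$ has a single maximum, and the same machinery characterizes $X\geq_{ps}Y$ by $\lim_{p\to 0^+}\delta_{ps}(p)\leq 0$. For Part 2(b), the hypotheses $\lim_{p\to 0^+}\delta(p)<0$, $\lim_{p\to 1^-}\delta(p)<0$ and $\delta(p^*)>0$ correspond to Case 4(b) of Theorem \ref{Th2.1}, giving the three-piece pattern decreasing–increasing–decreasing for $G^{-1}/F^{-1}$ with a local minimum at $p_1$. This is the most delicate situation: the Belzunce–Martínez-Riquelme criterion for $X\geq_{ps}Y$ under this two-mode pattern requires both the boundary condition $\lim_{p\to 0^+}\delta_{ps}(p)\leq 0$ and the local control $EPS_X(p_1)\geq EPS_Y(p_1)$ at the interior minimum, which is precisely what the statement lists.

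The main obstacle will be handling the limit $\lim_{p\to 0^+}\delta_{ps}(p)$ correctly. Since $EPS_X(p)$ involves division by $F^{-1}(p)$, which may vanish as $p\to 0^+$, the comparison $EPS_X(p)\leq EPS_Y(p)$ near the left endpoint is not literally the same as $\delta_{ps}(p)\geq 0$; one must verify that an asymptotic version of L'Hôpital or a Taylor expansion argument guarantees that the sign of $EPS_Y(p)-EPS_X(p)$ at $p=0^+$ is governed by the sign of $F^{-1}(p)/G^{-1}(p)-E[X]/E[Y]$. Once that asymptotic identification is in place, the three cases reduce cleanly to invocations of Theorem \ref{Th2.1} and the earlier ps-order sufficient conditions, as outlined above.
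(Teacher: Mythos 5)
Your proposal is correct and follows essentially the same route as the paper: Cases 1 and 2(a) are obtained by combining Theorem \ref{Th2.1} (Cases 3 and 2, respectively) with the earlier single-mode results for the ps order (Theorem 3.12 in Belzunce et al., 2017), and Case 2(b) by combining Case 4(b) of Theorem \ref{Th2.1} with the proof of Theorem 4.5 in Belzunce and Mart\'{i}nez-Riquelme (2017), where the piecewise monotonicity of $G^{-1}(p)\left(EPS_Y(p)-EPS_X(p)\right)$ reduces everything to non-positivity at the local maximum $p_1$. The asymptotic identification of the sign of $EPS_Y(p)-EPS_X(p)$ near $p=0^+$ with that of $\delta_{ps}$, which you flag as the delicate point, is exactly what those cited results absorb, so your outline matches the paper's proof.
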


\begin{proof}

The proof of Case 1 and Case 2.a follow from Theorem \ref{Th2.1} and by applying Theorem 3.12 in Belzunce et. al (2017). Let us consider now Case 2.b. From Theorem \ref{Th2.1}, we have that there exist $p_1,p_2 \in (0,1)$ such that $G^{-1}(p)/F^{-1}(p)$ is decreasing over $(0,p_1)$, increasing over $(p_1,p_2)$ and decreasing over $(p_2,1)$. According to the proof of Theorem 4.5 in Belzunce and Mart\'{i}nez-Riquelme (2017), under these conditions, the function $G^{-1}(p)(EPS_Y (p)-EPS_X (p))$ is increasing over $(0,p_1)$, decreasing over $(p_1,p_2)$ and increasing over $(p_2,1)$. Moreover, $G^{-1}(p)(EPS_Y (p)-EPS_X (p))$ ends non-positive. Therefore, $X\geq_{\textup{ps}}Y$ holds if, and only if, $G^{-1}(p)(EPS_Y (p)-EPS_X (p))$ is non-positive at its maximum $p_1$ or, equivalently, if $EPS_{X}(p_1) \geq EPS_{Y}(p_1)$. 

\end{proof}

Since the qmit order is weaker than the convex order but stronger than the star shaped one, the following question arises in the context of Theorem \ref{Th2.0}. If  the ratio of the quantile densities has $n\geq 1$ modes, is there any situation where the qmit order still holds? The following result characterizes the cases to get a positive answer to this question when $n$ is finite.

\begin{The}\label{Thqmitn} 
Let $X$ and $Y$ be two random variables with interval supports and quantile and density functions $F^{-1}$, $f$ and $G^{-1}$, $g$, respectively. Assume that $\frac{f(F^{-1}(p))}{g(G^{-1}(p))}$ has a finite number of modes ($n\geq 1$) at $(p_1, \ldots, p_n)$, such that the first one occurs from increasing to decreasing. Then, $X\leq_{\textup{qmit}}Y$ if, and only if, one of the following sets of conditions is satisfied
\begin{itemize}
\item  $n=2m$ and
\begin{equation}\label{even_cond}
\int_{0}^{p_{2i}} q\left(\frac{f(F^{-1}(p_{2i}))}{g(G^{-1}(p_{2i}))} \frac{1}{f(F^{-1}(q))} - \frac{1}{g(G^{-1}(q))} \right)dq \geq 0, \text{ for } i =1, \ldots, m.
\end{equation}
\item  $n=2m+1$ and 
\begin{equation}\label{odd_cond1}
\int_{0}^{p_{2i}} q\left(\frac{f(F^{-1}(p_{2i}))}{g(G^{-1}(p_{2i}))}\frac{1}{f(F^{-1}(q))} - \frac{1}{g(G^{-1}(q))} \right)dq \geq 0,  \text{ for }  i =1, \ldots, m  
\end{equation}
\noindent and
\begin{equation}\label{odd_cond12}
\lim_{p\rightarrow 1^-} \frac{f(F^{-1}(p))}{g(G^{-1}(p))}(F^{-1}(p)-E[X])-(G^{-1}(p)-E[Y]) \ge 0. 
\end{equation}
\end{itemize}
\end{The}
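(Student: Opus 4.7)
My strategy is to encode the qmit comparison as the positivity of a single auxiliary function, exploit a cancellation in its derivative to inherit the mode pattern of the quantile densities ratio, and then read off the binding constraints at the local minima plus, when $n$ is odd, at the right endpoint.

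I would write $R(p)=f(F^{-1}(p))/g(G^{-1}(p))$, $I_f(p)=\int_0^p q/f(F^{-1}(q))\,dq$, $I_g(p)=\int_0^p q/g(G^{-1}(q))\,dq$, and
\[
\psi(p)=R(p)\,I_f(p)-I_g(p).
\]
A direct computation gives $(I_g/I_f)'(p)=\frac{p}{f(F^{-1}(p))I_f(p)^2}\,\psi(p)$, so $X\leq_{\textup{qmit}}Y$ is equivalent to $\psi(p)\ge 0$ on $(0,1)$. Differentiating $\psi$ and using the key cancellation $R(p)\cdot I_f'(p)=p/g(G^{-1}(p))=I_g'(p)$, we obtain
\[
\psi'(p)=R'(p)\,I_f(p).
\]
Since $I_f(p)>0$ for $p>0$, the function $\psi$ has exactly the same monotonicity intervals and local extrema as $R$.

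Because the first mode of $R$ is a local maximum and $\psi(0^+)=0$ (which can be checked from $I_f(0^+)=I_g(0^+)=0$), the graph of $\psi$ rises from $0$, peaks at $p_1$, dips to $p_2$, rises to $p_3$, and so on. Non-negativity on $(0,p_1]$ is therefore automatic, and on each subsequent decreasing interval the infimum of $\psi$ is attained at the next even-indexed mode. The only additional place where $\psi$ might become negative is at $p=1$, and only when $\psi$ is decreasing on $(p_n,1)$, i.e.\ when $n=2m+1$. Thus $\psi\ge 0$ on $(0,1)$ reduces to $\psi(p_{2i})\ge 0$ for $i=1,\ldots,\lfloor n/2\rfloor$, which are exactly conditions \eqref{even_cond} and \eqref{odd_cond1}, together with $\lim_{p\to 1^-}\psi(p)\ge 0$ in the odd case.

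The last step is to identify this final limit with \eqref{odd_cond12}. Using the substitution $u=F^{-1}(q)$ and integration by parts (exploiting $u\overline F(u)\to 0$ at the right endpoint, a consequence of $E[X]<\infty$), one obtains
\[
I_f(p)=F^{-1}(p)-E[X]+\int_{F^{-1}(p)}^{u_X}\overline F(u)\,du,
\]
and similarly for $I_g$. Substituting into $\psi$ gives
\[
\psi(p)=\bigl\{R(p)(F^{-1}(p)-E[X])-(G^{-1}(p)-E[Y])\bigr\}+R(p)\!\int_{F^{-1}(p)}^{u_X}\!\overline F(u)\,du-\!\int_{G^{-1}(p)}^{u_Y}\!\overline G(u)\,du.
\]
As $p\to 1^-$ the last two remainders vanish, so the limit condition collapses to \eqref{odd_cond12}. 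The delicate point, which I expect to be the main obstacle, is controlling $R(p)\int_{F^{-1}(p)}^{u_X}\overline F(u)\,du\to 0$ since $R(p)$ may blow up; this can be handled via the slowly varying tail representation $f(F^{-1}(p))=L_F(p)(1-p)^{\alpha_F}$ recalled in the preceding remark.
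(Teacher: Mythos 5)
Your proof is correct and takes essentially the same route as the paper: your $\psi$ is exactly the paper's auxiliary function $\delta_{qmit}$, and the reduction of $X\leq_{\textup{qmit}}Y$ to $\psi\geq 0$, the inheritance of the mode pattern of $f(F^{-1}(p))/g(G^{-1}(p))$ (which the paper establishes via the two-step integral inequality from the proof of Theorem \ref{Th2.0} rather than your cancellation $\psi'(p)=R'(p)I_f(p)$), and the binding constraints at the even-indexed modes plus the limit at $1^-$ all coincide with the paper's argument. One remark: your flagged ``delicate point'' is moot, since in the odd case $R$ is decreasing on $(p_n,1)$ and hence bounded there, so $R(p)\int_{F^{-1}(p)}^{u_X}\overline F(u)\,du\to 0$ follows from $E[X]<\infty$ alone, with no need for the slowly varying tail representation.
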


\begin{proof} According to the definition of the qmit order, $X\leq_{\textup{qmit}}Y$ holds if, and only if,
\[
\frac{\displaystyle \int_0^p \frac{q}{g(G^{-1}(q))}dq}{\displaystyle \int_0^p \frac{q}{f(F^{-1}(q))}dq} \text{ is increasing in } p\in(0,1).
\]
Under the assumptions, the previous ratio is differentiable and, by taking derivatives, the condition above is equivalent to
\[
\delta_{qmit}(p)\equiv \int_0^p q\left(\frac{f(F^{-1}(p))}{g(G^{-1}(p))}\frac{1}{f(F^{-1}(q))} - \frac{1}{g(G^{-1}(q))} \right)dq\ge 0,\text{ for all } p\in(0,1).
\]
Following analogous steps to the proof of Theorem \ref{Th2.0}, it is easy to see that $\delta_{qmit}(p)$ is increasing (decreasing) over the intervals where $\frac{f(F^{-1}(p))}{g(G^{-1}(p))}$ is increasing (decreasing). The proof follows observing that Condition \eqref{even_cond} is equivalent to ask for $\delta_{qmit}$ being non-negative at its minimums, as well as \eqref{odd_cond1} and \eqref{odd_cond12}. The especial cases are the first and the last intervals, which we detail next. On the one hand, $\delta_{qmit}(p)\ge 0$, for all $ p\in(0, p_1)$, since the integrand is always non-negative over this interval by the assumptions. On the other hand, $\delta_{qmit}$ ends decreasing in the odd case and, therefore, the condition $\lim_{p\rightarrow 1^-} \delta_{qmit} (p)\geq 0 $ is required or, equivalently,
$$ \lim_{p\rightarrow 1^-} \frac{f(F^{-1}(p))}{g(G^{-1}(p))}(F^{-1}(p)-E[X])-(G^{-1}(p)-E[Y]) \ge 0, $$
that is, if Condition \eqref{odd_cond12} holds.
\end{proof}

Once again, the general conditions appearing in the statement of the previous theorem are not easy to deal with, so the interest of that result is rather theoretical. The main problem is that the modes must be known to check Condition \eqref{even_cond} or Condition \eqref{odd_cond1} and \eqref{odd_cond12} depending on the case. In contrast with that general case, the particular case $n=1$ does not require knowing the mode, because the unique condition which has to be verified is \eqref{odd_cond12}. Therefore, the following corollary about the particular case $n=1$ has interest from a practical point of view, because that case often appears in practice, as we have already pointed out.

\begin{Cor}\label{Th2.3} Let $X$ and $Y$ be two non-negative random variables with quantile and density functions $F^{-1}$, $f$ and $G^{-1}$, $g$, respectively, where the quantile functions are differentiable.  Let us assume that there exists a point $p^* \in (0,1)$, such that $f(F^{-1}(p))/g(G^{-1}(p))$ is increasing over $\left(0,p^*\right)$ and decreasing over $\left(p^*,1\right)$. Then, $X\leq_{\textup{qmit}}Y$ if, and only if, 
\begin{equation}\label{lim_cond}
\lim_{p\rightarrow 1^-} \frac{f(F^{-1}(p))}{g(G^{-1}(p))}(F^{-1}(p)-E[X])-(G^{-1}(p)-E[Y]) \ge 0,
\end{equation}
\end{Cor}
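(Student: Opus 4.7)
The plan is to observe that this corollary is essentially the case $n=1$ of Theorem \ref{Thqmitn}, where one takes $n=2m+1$ with $m=0$. With $m=0$, the family of inequalities \eqref{odd_cond1} is indexed over the empty range $i=1,\ldots,0$ and hence imposes no constraint, while \eqref{odd_cond12} coincides verbatim with \eqref{lim_cond}. So the statement is a direct corollary of Theorem \ref{Thqmitn}; strictly speaking, no further work is needed.

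For a self-contained argument one would retrace the proof of Theorem \ref{Thqmitn} specialized to the unimodal case. First, by differentiating the defining ratio of the qmit order, the condition $X\leq_{\textup{qmit}}Y$ is equivalent to
\[
\delta_{qmit}(p) := \int_0^p q\left(\frac{f(F^{-1}(p))}{g(G^{-1}(p))}\frac{1}{f(F^{-1}(q))} - \frac{1}{g(G^{-1}(q))}\right) dq \geq 0
\]
for every $p\in(0,1)$. Next, exactly as in the proof of Theorem \ref{Th2.0}, one checks that $\delta_{qmit}$ inherits the monotonicity of the quantile density ratio: increasing on $(0,p^*)$ and decreasing on $(p^*,1)$. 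Moreover, on $(0,p^*)$ the integrand is pointwise nonnegative, because $q\le p\le p^*$ forces $f(F^{-1}(p))/g(G^{-1}(p))\geq f(F^{-1}(q))/g(G^{-1}(q))$. Combined with $\delta_{qmit}(0^+)=0$, this yields $\delta_{qmit}\ge 0$ on all of $(0,p^*]$, and by unimodality the global infimum of $\delta_{qmit}$ on $(0,1)$ is attained only in the limit $p\to 1^-$. Therefore $X\leq_{\textup{qmit}}Y$ is equivalent to $\lim_{p\to 1^-}\delta_{qmit}(p)\ge 0$.

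The final step is to identify that limit with the left-hand side of \eqref{lim_cond}. The substitution $u=F^{-1}(q)$ and integration by parts give
\[
\int_0^p \frac{q}{f(F^{-1}(q))}\,dq = pF^{-1}(p)-\int_{l_X}^{F^{-1}(p)} u f(u)\,du,
\]
and analogously for $Y$. Letting $p\to 1^-$ and using $\int_0^{F^{-1}(p)} u f(u)\,du\to E[X]$ and $\int_0^{G^{-1}(p)} u g(u)\,du\to E[Y]$ by the finite-mean assumption, the terms recombine precisely into $\frac{f(F^{-1}(p))}{g(G^{-1}(p))}(F^{-1}(p)-E[X]) - (G^{-1}(p)-E[Y])$. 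The only mildly delicate point, and the one obstacle worth flagging, is the unbounded-support case: individual pieces such as $pF^{-1}(p)$ may diverge, but their subtraction against $E[X]$ (resp.\ $E[Y]$) cancels the divergence, so that the expression in \eqref{lim_cond} is the natural well-posed limit.
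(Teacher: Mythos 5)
Your proposal matches the paper exactly: the corollary is stated there without a separate proof, precisely as the $n=2m+1$, $m=0$ case of Theorem \ref{Thqmitn}, in which the conditions \eqref{odd_cond1} are vacuous and \eqref{odd_cond12} is \eqref{lim_cond}. Your self-contained retracing (monotonicity of $\delta_{qmit}$ inherited from the quantile density ratio, pointwise nonnegativity on $(0,p^*)$, and the integration-by-parts identification of $\lim_{p\to 1^-}\delta_{qmit}(p)$ with \eqref{lim_cond}) is also faithful to the paper's proof of that theorem, and your remark on the cancellation of divergences in the unbounded-support case is a correct refinement of a step the paper leaves implicit.
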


\begin{Rem} Recently, Kayid et. al (2018), provided a set of conditions for the qmit order. In particular, they assume that the ratio of the quantile densities is unimodal, as in the previous theorem, but they also assume the monotonicity of the function $(G^{-1}(p)-E[Y])/(F^{-1}(p)-E[X])$. It is obvious that Condition \eqref{lim_cond} is easier to check than the previous one. 
\end{Rem}

Next, we provide an analogous version of Theorem \ref{Thqmitn} and Corollary \ref{Th2.3} for the dmrl order. 

\begin{The}\label{Thdmrln} 
Let $X$ and $Y$ be two random variables with interval supports and quantile and density functions $F^{-1}$, $f$ and $G^{-1}$, $g$, respectively. Assume that $\frac{f(F^{-1}(p))}{g(G^{-1}(p))}$ has a finite number of modes ($n\geq 1$) at $(p_1, \ldots, p_n)$, such that the last one occurs from decreasing to increasing. Then, $X\leq_{\textup{dmrl}}Y$ if, and only if, one of the following sets of conditions is satisfied
\begin{itemize}
\item  $n=2m$ and
\begin{equation}\label{even_cond2}
\int_{p_{2i-1}}^1 (1-q)\left(\frac{1}{g(G^{-1}(q))} - \frac{f(F^{-1}(p_{2i-1}))}{g(G^{-1}(p_{2i-1}))}  \frac{1}{f(F^{-1}(q))}\right)dq \geq 0, \text{ for } i =1, \ldots, m.
\end{equation}
\item  $n=2m+1$ and 
\begin{equation}\label{odd_cond2}
\int_{p_{2i}}^1 (1-q)\left(\frac{1}{g(G^{-1}(q))} - \frac{f(F^{-1}(p_{2i}))}{g(G^{-1}(p_{2i}))}  \frac{1}{f(F^{-1}(q))}\right)dq \geq 0, \text{ for }  i =1, \ldots, m 
\end{equation}
\noindent and
\begin{equation}\label{odd_cond22}
\lim_{p\rightarrow 0^+} \frac{f(F^{-1}(p))}{g(G^{-1}(p))}(F^{-1}(p)-E[X])-(G^{-1}(p)-E[Y]) \ge 0. 
\end{equation}
\end{itemize}
\end{The}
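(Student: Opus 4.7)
My plan is to adapt the proof of Theorem \ref{Thqmitn} with the symmetric changes needed when the dmrl order replaces the qmit order, so that head integrals on $(0,p)$ are traded for tail integrals on $(p,1)$. Writing $\phi(p):=f(F^{-1}(p))/g(G^{-1}(p))$ for brevity and differentiating the ratio in \eqref{def-dmrl}, a short calculation using $\phi(p)/f(F^{-1}(p))=1/g(G^{-1}(p))$ shows that $X\leq_{\textup{dmrl}} Y$ holds if, and only if,
\[
\delta_{dmrl}(p) \equiv \int_p^1 (1-q)\left(\frac{1}{g(G^{-1}(q))} - \phi(p)\frac{1}{f(F^{-1}(q))}\right)dq \ge 0, \quad p\in(0,1).
\]
I would then reproduce the integral-comparison technique used in Theorem \ref{Th2.0} and Theorem \ref{Thqmitn} to relate the monotonicity of $\delta_{dmrl}$ to that of $\phi$. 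Letting $A(p),B(p)$ denote the two tail integrals and taking $p<p'$ in an interval of decrease of $\phi$, the bound $\phi(p)\ge\phi(p')$ yields $\delta_{dmrl}(p)\le A(p)-\phi(p')B(p)$; and, since $\phi(q)\ge\phi(p')$ for $q\in(p,p')$ implies $\phi(p')/f(F^{-1}(q))\le 1/g(G^{-1}(q))$ on that subinterval, the piece over $(p,p')$ is non-negative, giving $A(p)-\phi(p')B(p)\le A(p')-\phi(p')B(p')=\delta_{dmrl}(p')$. Thus $\delta_{dmrl}$ is increasing on intervals where $\phi$ decreases and, by symmetry, decreasing on intervals where $\phi$ increases, which is opposite to the behavior of $\delta$ and $\delta_{qmit}$.

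Next, I would use the assumption that the last mode $p_n$ occurs from decreasing to increasing to fix the parity of the remaining modes. For $n=2m$, this forces $p_1,p_3,\ldots,p_{2m-1}$ to be the local maxima of $\phi$, equivalently the local minima of $\delta_{dmrl}$, while $\phi$ is increasing on $(0,p_1)$, so $\delta_{dmrl}$ decreases there and its minimum on that interval is attained at $p_1$. For $n=2m+1$ the roles flip: $p_2,p_4,\ldots,p_{2m}$ are the local minima of $\delta_{dmrl}$, and since $\phi$ now decreases on $(0,p_1)$, $\delta_{dmrl}$ increases there, producing an additional minimum at $0^+$. In both parities, $\phi$ is increasing on the last interval $(p_n,1)$; by the identity $\phi(q)/f(F^{-1}(q))=1/g(G^{-1}(q))$, the integrand defining $\delta_{dmrl}(p)$ is non-negative throughout $(p_n,1)$, so $\delta_{dmrl}(p)\ge 0$ automatically on that interval and no separate endpoint condition at $1^-$ is needed. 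Writing the inequalities $\delta_{dmrl}(p_{2i-1})\ge 0$ and $\delta_{dmrl}(p_{2i})\ge 0$ then reproduces exactly \eqref{even_cond2} and \eqref{odd_cond2}.

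It remains to recast the $0^+$ limit needed in the odd case. Using the identities $\int_0^1 (1-q)/f(F^{-1}(q))\,dq=E[X]-l_X$ and $\int_0^1 (1-q)/g(G^{-1}(q))\,dq=E[Y]-l_Y$ together with $F^{-1}(0^+)=l_X$ and $G^{-1}(0^+)=l_Y$, I obtain
\[
\lim_{p\to 0^+}\delta_{dmrl}(p) = (E[Y]-l_Y) - \bigl(\lim_{p\to 0^+}\phi(p)\bigr)(E[X]-l_X),
\]
which rearranges to the expression in \eqref{odd_cond22}. The main technical hurdle I expect is the endpoint bookkeeping: establishing rigorously that $\delta_{dmrl}\ge 0$ is automatic on $(p_n,1)$ (so that the statement contains no $1^-$ condition), and justifying the rewriting of the $0^+$ limit into the symmetric form \eqref{odd_cond22} without imposing extra regularity on $\phi$ beyond what the paper's standing assumptions already supply.
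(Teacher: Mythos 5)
Your overall route is exactly the paper's: reduce $X\leq_{\textup{dmrl}}Y$ to the pointwise condition $\delta_{dmrl}(p)\ge 0$, show that $\delta_{dmrl}$ is increasing (decreasing) on the intervals where $\phi(p)=f(F^{-1}(p))/g(G^{-1}(p))$ is decreasing (increasing), locate the minima of $\delta_{dmrl}$ via the parity forced by the last mode of $\phi$ being a minimum, observe that $\delta_{dmrl}\ge 0$ automatically on $(p_n,1)$ because the integrand is pointwise non-negative there (which is why no $1^-$ condition appears), and translate the $0^+$ limit into \eqref{odd_cond22} via $\int_0^1 (1-q)/f(F^{-1}(q))\,dq=E[X]-l_X$ and its analogue for $Y$. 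Your parity bookkeeping in both cases and the limit computation agree with the paper's proof.

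However, the inequality chain you give for the monotonicity lemma fails as written. Set $A(p)=\int_p^1 (1-q)/g(G^{-1}(q))\,dq$ and $B(p)=\int_p^1(1-q)/f(F^{-1}(q))\,dq$, and take $p<p'$ in an interval where $\phi$ decreases. Your first step, $\delta_{dmrl}(p)\le A(p)-\phi(p')B(p)$, is fine; but you then assert $A(p)-\phi(p')B(p)\le A(p')-\phi(p')B(p')$ while correctly noting that the piece $\int_p^{p'}(1-q)\bigl(1/g(G^{-1}(q))-\phi(p')/f(F^{-1}(q))\bigr)dq$ is non-negative. Since $A(p)-\phi(p')B(p)=A(p')-\phi(p')B(p')$ \emph{plus} that piece, its non-negativity gives the reverse inequality $A(p)-\phi(p')B(p)\ge \delta_{dmrl}(p')$, and your two estimates no longer chain. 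For tail integrals the two moves must be performed in the opposite order to the head-integral case of Theorems \ref{Th2.0} and \ref{Thqmitn}: first extend the integral with $\phi(p)$ frozen, noting that the added piece $\int_p^{p'}(1-q)\bigl(1/g(G^{-1}(q))-\phi(p)/f(F^{-1}(q))\bigr)dq$ is non-positive because $\phi(q)\le\phi(p)$ on $(p,p')$, which yields $\delta_{dmrl}(p)\le A(p')-\phi(p)B(p')$; then use $\phi(p)\ge\phi(p')$ and $B(p')\ge 0$ to conclude $\delta_{dmrl}(p)\le A(p')-\phi(p')B(p')=\delta_{dmrl}(p')$. Equivalently, differentiate directly: the boundary terms cancel since $\phi(p)/f(F^{-1}(p))=1/g(G^{-1}(p))$, leaving $\delta_{dmrl}'(p)=-\phi'(p)B(p)$, whose sign is opposite to that of $\phi'$. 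With this local repair your argument coincides with the paper's proof.
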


\begin{proof} $X\leq_{\textup{dmrl}}Y$ holds if, and only if, 
\[
\frac{\displaystyle \int_p^1 \frac{1-q}{g(G^{-1}(q))}dq}{\displaystyle \int_p^1 \frac{1-q}{f(F^{-1}(q))}dq} \text{ is increasing in } p\in(0,1),
\]
\noindent or, equivalently, 
\[
\delta_{dmrl}(p)\equiv \int_p^1 (1-q)\left(\frac{1}{g(G^{-1}(q))} - \frac{f(F^{-1}(p))}{g(G^{-1}(p))}  \frac{1}{f(F^{-1}(q))}\right)dq\ge 0,\text{ for all } p\in(0,1).
\]
Once again, it is easy to see that $\delta_{dmrl}(p)$ is increasing (decreasing) over the intervals where $\frac{f(F^{-1}(p))}{g(G^{-1}(p))}$ is decreasing (increasing). The proof follows observing that Condition \eqref{even_cond2} is equivalent to ask for $\delta_{dmrl}$ being non-negative at its minimums, as well as \eqref{odd_cond2} and \eqref{odd_cond22}. The especial cases are the first and the last intervals, which we detail next. On the one hand, $\delta_{dmrl}(p)\ge 0$, for all $ p\in(p_n, 1)$, since the integrand is always non-negative over this interval by the assumptions. On the other hand, $\delta_{dmrl}$ starts increasing in the odd case and, therefore, the condition $\lim_{p\rightarrow 0^+} \delta_{dmrl} (p)\geq 0 $ is required or, equivalently,
$$ \lim_{p\rightarrow 0^+} \frac{f(F^{-1}(p))}{g(G^{-1}(p))}(F^{-1}(p)-E[X])-(G^{-1}(p)-E[Y]) \ge 0, $$
that is, if Condition \eqref{odd_cond22} holds.
\end{proof}

Again, the particular case $n=1$ has a special interest. Therefore, we state the following corollary for that case.

\begin{Cor}\label{Th2.5} Let $X$ and $Y$ be two non-negative random variables with interval supports and quantile and density functions $F^{-1}$, $f$ and $G^{-1}$, $g$, respectively, where the quantile functions are differentiable.  Let us assume that there exists a value $p^* \in (0,1)$, such that $f(F^{-1}(p))/g(G^{-1}(p))$ is decreasing over $\left(0,p^*\right)$ and increasing over $\left(p^*,1\right)$. Then, $X\leq_{\textup{dmrl}}Y$ if, and only if, 
\begin{equation}\label{lim_conddmrl}
\lim_{p\rightarrow 0^+}  \frac{f(F^{-1}(p))}{g(G^{-1}(p))}(F^{-1}(p)-E[X])-(G^{-1}(p)-E[Y])\ge 0.
\end{equation}
\end{Cor}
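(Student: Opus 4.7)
The plan is to derive this corollary as the $n = 1$ special case of Theorem \ref{Thdmrln}. Under the hypothesis that $f(F^{-1}(p))/g(G^{-1}(p))$ is decreasing on $(0,p^*)$ and increasing on $(p^*,1)$, the ratio has exactly one mode at $p^*$, and this mode occurs from decreasing to increasing. Thus Theorem \ref{Thdmrln} applies in the odd case with $n = 2m+1 = 1$, which forces $m = 0$.

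With $m = 0$, the family of integral inequalities \eqref{odd_cond2} indexed by $i = 1, \ldots, m$ is vacuous, so only the limit condition \eqref{odd_cond22} remains. But \eqref{odd_cond22} coincides verbatim with \eqref{lim_conddmrl}, which immediately yields both directions of the corollary.

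For a self-contained verification one can alternatively repeat the argument of Theorem \ref{Thdmrln} at this specialized level: introduce $\delta_{dmrl}(p) = \int_p^1 (1-q)\bigl[\tfrac{1}{g(G^{-1}(q))} - \tfrac{f(F^{-1}(p))}{g(G^{-1}(p))}\tfrac{1}{f(F^{-1}(q))}\bigr]dq$, whose non-negativity on $(0,1)$ is equivalent to $X\leq_{\textup{dmrl}}Y$. As observed in the proof of Theorem \ref{Thdmrln}, $\delta_{dmrl}$ has the opposite monotonicity to the quantile density ratio, hence it is increasing on $(0,p^*)$ and decreasing on $(p^*,1)$. Since $\delta_{dmrl}(1^-) = 0$, the infimum of $\delta_{dmrl}$ on $(0,1)$ is attained either at $p\to 1^-$ (value $0$) or at $p\to 0^+$, so $\delta_{dmrl}\geq 0$ on $(0,1)$ if and only if $\lim_{p\to 0^+}\delta_{dmrl}(p)\geq 0$. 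Using $\int_p^1 (1-q)/f(F^{-1}(q))\,dq = (1-p)\,m(F^{-1}(p))$ and its analogue for $g$, and letting $p\to 0^+$ so that $F^{-1}(p)\to l_X$, $G^{-1}(p)\to l_Y$, $m(F^{-1}(p))\to E[X]-l_X$ and $l(G^{-1}(p))\to E[Y]-l_Y$, this limit rearranges into exactly \eqref{lim_conddmrl}.

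No genuine obstacle is anticipated: the corollary is an immediate specialization of Theorem \ref{Thdmrln}. The only minor care needed is in recording that the $m = 0$ instance of \eqref{odd_cond2} is empty, so that \eqref{lim_conddmrl} alone is both necessary and sufficient; and, if one prefers the direct route, in performing the standard translation between incomplete integrals of $1/g(G^{-1}(\cdot))$ and the mean residual life function of $Y$.
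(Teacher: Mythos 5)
Your proposal is correct and matches the paper's own (implicit) proof: the corollary is stated there precisely as the $n=1$ specialization of Theorem \ref{Thdmrln}, where the conditions \eqref{odd_cond2} are vacuous for $m=0$ and \eqref{odd_cond22} is literally \eqref{lim_conddmrl}. Your supplementary direct argument via $\delta_{dmrl}$ also faithfully reproduces the monotonicity and limit analysis in the proof of Theorem \ref{Thdmrln}, so nothing further is needed.
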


\section{Applications}\label{S3}

In this section, we first summarize the results given along the previous section providing a whole discussion to compare two random variables in some of the transform orders (the star-shaped, the qmit, the dmrl and the ps order) in such situations where the ratio of the quantile density functions starts increasing and ends decreasing. Later, two Tukey generalized distributed random variables are ordered in some of the mentioned criteria according to the provided discussion. Finally, the results are applied to establish new relationships among non-monotone and positive aging notions.

\subsection{A summarized result to compare random variables in the transform orders}

Let us consider the different results that we have established in the previous section to simplify the comparison of two random variables in some of the transform orders. 

Let $X$ and $Y$ be two non-negative random variables with interval supports $Supp(X)=\left(l_X,u_X\right)$ and $Supp(Y)=\left(l_Y,u_Y\right)$, respectively, and quantile and density functions $F^{-1}$, $f$ and $G^{-1}$, $g$, respectively, where the quantile functions are differentiable. Let us assume that there exists a point $p^* \in (0,1)$, such that $f(F^{-1}(p))/g(G^{-1}(p))$ is increasing over $\left(0,p^*\right)$ and decreasing over $\left(p^*,1\right)$.  Then, the following algorithm sets the sufficient and necessary conditions for the qmit, the dmrl, the star-shaped and the ps orders.
\begin{enumerate}
	\item $X\leq_{\textup{qmit}}Y$ if, and only if, Condition \eqref{odd_cond12} holds.
	\item $X\geq_{\textup{dmrl}}Y$ if, and only if, Condition \eqref{odd_cond22} holds.
	\item $X\leq_{\textup{*}}Y$ if, and only if, $\lim_{p \rightarrow 0^{+}} \delta(p) \geq 0$ and $\lim_{p \rightarrow 1^-} \delta(p) \geq 0$.
	\item $X\geq_{\textup{*}}Y$ if, and only if, $ \delta(p^*) \leq 0$.
	\item $X\leq_{\textup{ps}}Y$ if, and only if, $\lim_{p \rightarrow 0^{+}} \delta(p) < 0$, $\lim_{p \rightarrow 1^{-}} \delta(p) \geq 0$ and $\lim_{p \rightarrow 0^{+}} \delta_{ps}(p) \geq 0$.
	\item $X\geq_{\textup{ps}}Y$ if, and only if, one of the following conditions hold
	\begin{enumerate}
	   \item $\lim_{p \rightarrow 0^{+}} \delta(p) \geq 0$, $\lim_{p \rightarrow 1^{-}} \delta(p) < 0$ and $\lim_{p \rightarrow 0^{+}} \delta_{ps}(p) \leq 0$.
	   \item $\lim_{p \rightarrow 0^{+}} \delta(p) < 0$, $\lim_{p \rightarrow 1^{-}} \delta(p) < 0$, $\delta(p^*) > 0$, $\lim_{p \rightarrow 0^{+}} \delta_{ps}(p) \leq 0$ and \newline $EPS_{X}(p_1) \geq EPS_{Y}(p_1)$, where $\delta $ is defined in \eqref{delta}, where $\delta_{ps}$ is defined in \eqref{deltaps}, $p_1$ denotes the minimum of the ratio of the quantiles functions and $EPS_{X}$ and $EPS_Y$ denote the expected proportional shortfall function of $X$ and $Y$, respectively.		
	\end{enumerate}
\end{enumerate}

Observe that the weaker orders also trivially hold when the stronger one is satisfied. Now, let us see some important remarks.

\begin{Rem} 
All the cases follow directly from the results given in the previous section except Case 2. The proof of this case is analogous to the one of Corollary \ref{Th2.5} taking into account that $f(F^{-1}(p))/g(G^{-1}(p))$ is decreasing over $\left(0,p^*\right)$ and increasing over $\left(p^*,1\right)$ and that $X\geq_{\textup{dmrl}}Y$ if, and only if, 
\[
\frac{\displaystyle \int_p^1 \frac{1-q}{g(G^{-1}(q))}dq}{\displaystyle \int_p^1 \frac{1-q}{f(F^{-1}(q))}dq} \text{ is decreasing in } p\in(0,1).
\]
\end{Rem}

\begin{Rem}
In such cases where there exists a value $p^* \in (0,1)$, such that the ratio $f(F^{-1}(p))/g(G^{-1}(p))$ is increasing over $\left(0,p^*\right)$ and decreasing over $\left(p^*,1\right)$ and $l_X \geq 0$, if the star shaped order holds and
\begin{equation}\label{1}
\lim_{p\rightarrow 1^{-}} \frac{f(F^{-1}(p))}{g(G^{-1}(p))} \leq \frac{E[Y]}{E[X]},
\end{equation}
\noindent then $X\leq_{\textup{qmit}}Y$. 
Therefore, in such a case, we obtain that both orders are equivalent. 
$$ X\leq_{\textup{*}}Y   \hspace{1cm} \Longleftrightarrow   \hspace{1cm}  X\leq_{\textup{qmit}}Y.$$
\end{Rem}

Whenever quantile density functions are unimodal, the above algorithm is especially useful for comparing random variables, provided they have a parametric model for their quantile density functions. Firstly, the situations where the ratio of the quantile density functions has just one maximum have to be identified. Then, the conditions \eqref{odd_cond12} and \eqref{odd_cond22} have to be checked to set the cases where the qmit and/or the dmrl order hold. Additionally, the limits of the $\delta$ function have to be checked to set the cases where the star-shaped order holds, as well as the sign of $\delta (p^*)$, if possible. Finally, the limits of the $\delta_{ps}$ function and the remaining conditions have to be checked to set the cases where the ps order holds.
%Subsequently, the final steps can be taken. , the situations where the ratio of the quantile density functions has just one maximum have to be identified. Later, the conditions \eqref{odd_cond12} and \eqref{odd_cond22} have to be checked to set the cases where the qmit and/or the dmrl order hold. In addition, the limits of the $\delta$ function have to be checked to set the cases where the star-shaped order holds, as well as the sign of $\delta (p^*)$, if possible. Finally, the limits of the $\delta_{ps}$ function and the remaining conditions have to be checked to set the cases where the ps order holds.

As mentioned previously, let us now compare two Tukey generalized distributed random variables considering the qmit, the dmrl and the star-shaped orders.

\begin{Exam} Let us consider two Tukey generalized distributed random variables, that is, $X \sim T(\lambda_1, \eta_1, \alpha_1)$ and $Y\sim T(\lambda_2, \eta_2, \alpha_2)$ with quantile functions
\[
F^{-1}(p)= \lambda_1 + \eta_1 (p^{\alpha_1}-(1-p)^{\alpha_1}) \text{ , for all } p\in (0,1)
\]
and 
\[
G^{-1}(p)= \lambda_2 + \eta_2 (p^{\alpha_2}-(1-p)^{\alpha_2}) \text{ , for all } p\in (0,1),
\]
respectively, where $\lambda_1, \lambda_2, \eta_1, \eta_2, \alpha_1, \alpha_2 \in \mathbb{R}$ and $\eta_1, \eta_2 \neq0$. Since our results are given for non-negative random variables, we will assume that $l_X=\lambda_1- \eta_1 \ge 0$ and $l_Y=\lambda_2- \eta_2 \ge 0$.

According to the previous discussion, we first seek for the situations where the ratio $f(F^{-1}(p))/g(G^{-1}(p))$ has just one maximum. Along this discussion, we assume that $\alpha_2 \neq \alpha_1$, otherwise the discussion is trivial. In addition, we study separate the particular cases where $\alpha_1 \in \{1,2\}$ or $\alpha_2 \in \{1,2\}$, since they are different from the rest. Let us see first the case $\alpha_1 \in \{1,2\}$, where
\[
\frac{f(F^{-1}(p))}{g(G^{-1}(p))}= \frac{\eta_2 \alpha_2 (p^{\alpha_2-1}+ (1-p)^{\alpha_2-1})}{2 \eta_1 }.
\]
It is easy to see that the previous function has just one maximum on the interval $(0,1)$ if, and only if, $\alpha_2 \in (1,2)$. Under the assumptions, we have
\begin{itemize}
	\item $ \lim_{p \rightarrow 1^-} \frac{f(F^{-1}(p))}{g(G^{-1}(p))}(F^{-1}(p)-E[X])-(G^{-1}(p)-E[Y]) = \eta_2(\frac{\alpha_2}{2}-1) < 0$.
	\item $\lim_{p \rightarrow 0^+} \frac{f(F^{-1}(p))}{g(G^{-1}(p))}(F^{-1}(p)-E[X])-(G^{-1}(p)-E[Y]) = \eta_2(1-\frac{\alpha_2}{2}) > 0$.
	\item $ \lim_{p \rightarrow 1^-} \delta(p) = \frac{(\lambda_1+\eta_1) \eta_2 \alpha_2}{ 2 \eta_1 }- (\lambda_2+\eta_2)$.
	\item $\lim_{p \rightarrow 0^+} \delta(p) =  \frac{(\lambda_1-\eta_1) \eta_2 \alpha_2}{ 2 \eta_1 }- (\lambda_2-\eta_2)$.
\end{itemize}
Moreover, it is easy to see that the condition $ \lim_{p \rightarrow 1^-} \delta(p) \geq 0$ implies $ \lim_{p \rightarrow 0^+} \delta(p) \geq 0$, since
\begin{equation}\label{11}
0\leq \lim_{p \rightarrow 1^-} \delta(p) = \lim_{p \rightarrow 0^+} \delta(p) + \eta_2 (\alpha_2-2) \leq \lim_{p \rightarrow 0^+} \delta(p).
\end{equation}
To sum up, we can state the following. Let $X \sim T(\lambda_1, \eta_1, \alpha_1)$ and $Y\sim T(\lambda_2, \eta_2, \alpha_2)$ such that $\alpha_1 \in \{1,2\}$ and $\alpha_2 \in (1,2)$, then
\begin{itemize}
	\item The qmit order does not hold.
	\item The dmrl order does not hold.
  \item $X\leq_{\textup{*}}Y$ if, and only if, $ (\lambda_1+\eta_1)\eta_2 \alpha_2 > (\lambda_2+\eta_2) 2 \eta_1$.
  \item $X\geq_{\textup{*}}Y$ if, and only if, $ (\lambda_1-\eta_1)\eta_2 \alpha_2 < (\lambda_2-\eta_2) 2 \eta_1$.
\end{itemize}

Analogously, we can also state the following. Let $X \sim T(\lambda_1, \eta_1, \alpha_1)$ and $Y\sim T(\lambda_2, \eta_2, \alpha_2)$ such that $\alpha_2 \in \{1,2\}$, then
\begin{itemize}
	\item The qmit order does not hold.
	\item The dmrl order does not hold.
  \item $X\geq_{\textup{*}}Y$ if, and only if, $\alpha_1 \in (0,1)$.
  \item $X\leq_{\textup{*}}Y$ if, and only if, $\alpha_1 \in (2,+\infty)$ and $(\lambda_1+\eta_1)2 \eta_2 \geq (\lambda_2+\eta_2) \alpha_1 \eta_1$.
\end{itemize}

Now, let assume that $\alpha_1, \text{  }\alpha_2 \notin \{1,2\}$ and $\alpha_1 \neq \alpha_2$. The behavior of $f(F^{-1}(p))/g(G^{-1}(p))$ is equivalent to the one of  
\[
h(p) = \frac{p^{\alpha_2-1}+ (1-p)^{\alpha_2-1}}{p^{\alpha_1-1}+ (1-p)^{\alpha_1-1}}. 
\]

Next, we seek for situations where $h(p)$ is initially increasing and later decreasing. As a first step, we identify situations where 
\[
S^{-}\left(\frac{\partial}{\partial p }h\right)\le 1,  \text{  on the interval } (0,1),
\]
or, equivalently,
\[
S^-(l))= S^{-}(l_1 + l_2 - l_3) \leq 1, \text{  on the interval } (0,1),
\] 
where
\[
l_1(p)=(\alpha_2- \alpha_1) p^{\alpha_2+ \alpha_1-3}+ (\alpha_1- \alpha_2) (1-p)^{\alpha_2+ \alpha_1-3},
\]
\[
l_2(p)=p^{\alpha_2-2}(1-p)^{\alpha_1-2}[(\alpha_2-1)(1-p)+(\alpha_1-1)p]
\]
and
\[
l_3(p)=p^{\alpha_1-2}(1-p)^{\alpha_2-2}[(\alpha_1-1)(1-p)+(\alpha_2-1)p].
\]

On the one hand, it is easy to see that $l_1$ is decreasing if, and only if, 
\begin{equation}\label{l1}
(\alpha_2- \alpha_1)(\alpha_2+ \alpha_1 -3)\leq 0.
\end{equation}

On the other hand, it is easy to see that the function $l_2$ is decreasing if 
\begin{equation}\label{l2-1}
(\alpha_2-2)(\alpha_2-1) < 0,
\end{equation}
and 
\begin{equation}\label{l2-2}
(\alpha_1-2)(\alpha_1-1) > 0. 
\end{equation}

Analogously, $l_3$ is increasing if \eqref{l2-1} and \eqref{l2-2} are satisfied. Therefore, if \eqref{l1}, \eqref{l2-1} and \eqref{l2-2} are satisfied, then $l(p)$ is decreasing and, consequently, $S^{-}( l)\leq 1$, on the interval $(0,1),$ with the sign sequence $(+,-)$ in case of equality. Moreover, it is easy to see that $\lim_{p\rightarrow 0^{+}} l(p) >0$ and $\lim_{p\rightarrow 1^{-}} l(p)< 0$, under \eqref{l1}, \eqref{l2-1} and \eqref{l2-2}. Therefore,  $S^{-}( l) = 1$, on the interval $(0,1)$, with the sign sequence $(+,-)$, that is,  there exists $p^* \in (0,1)$ such that the ratio $f(F^{-1}(p))/g(G^{-1}(p))$ is increasing over $(0,p^*)$ and decreasing over $(p^*,1)$. Figure \ref{dominio} depicts the region where \eqref{l1}, \eqref{l2-1} and \eqref{l2-2} are satisfied and, therefore, the ratio $f(F^{-1}(p))/g(G^{-1}(p))$ is unimodal. 

\begin{figure}[ht]\centering
\includegraphics[width=\linewidth]{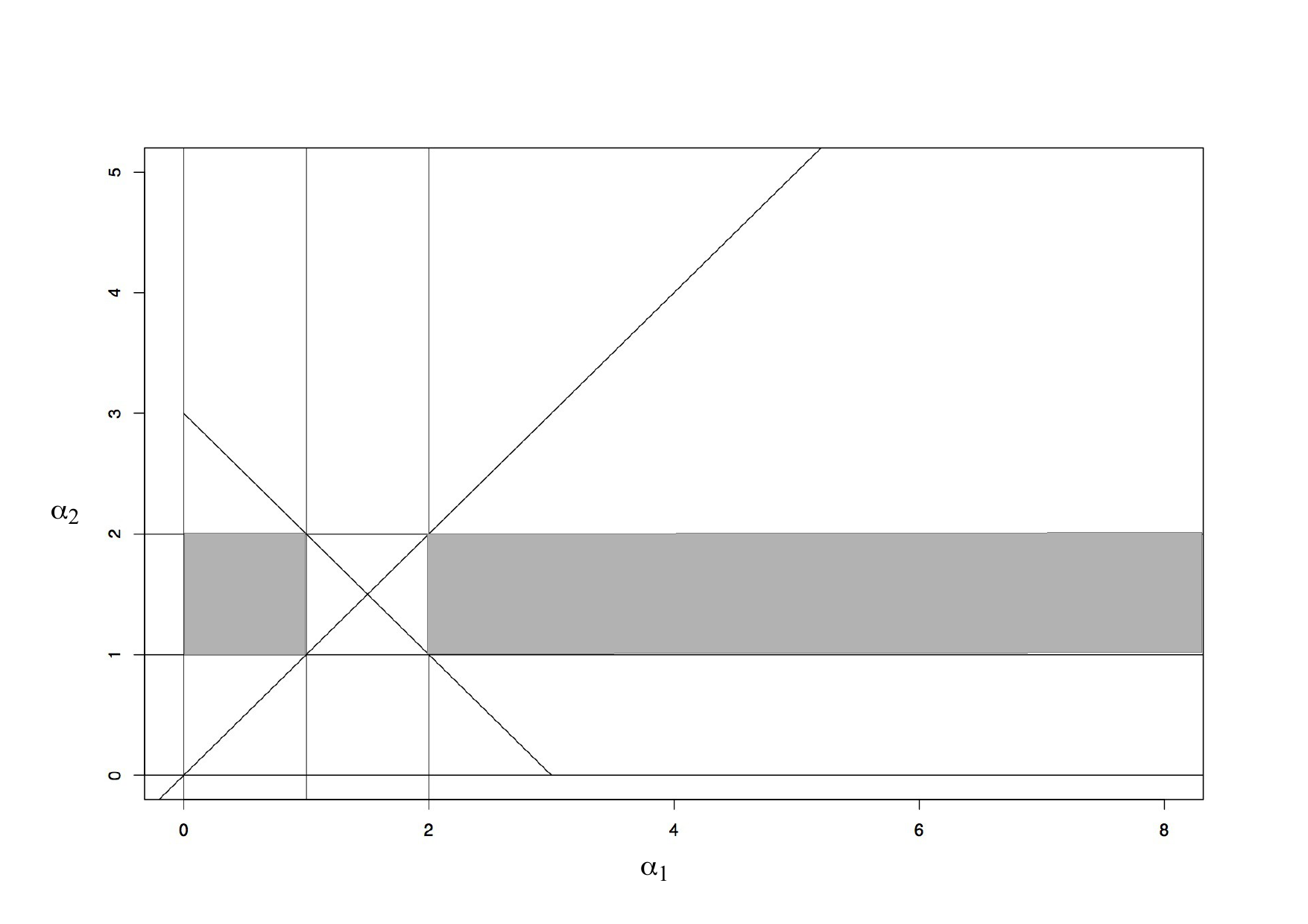}
\caption{\label{dominio}\textit{Domain for $\alpha_1$ and $\alpha_2$ such that the ratio of the quantile density functions is unimodal (grey area).}}
\end{figure} 

In order to conclude, we check the limits of the function $\frac{f(F^{-1}(p))}{g(G^{-1}(p))}(F^{-1}(p)-E[X])-(G^{-1}(p)-E[Y])$ and $\delta$:
\[
\lim_{p\rightarrow 1^-} \frac{f(F^{-1}(p))}{g(G^{-1}(p))}(F^{-1}(p)-E[X])-(G^{-1}(p)-E[Y]) = 
\left\{ 
\begin{array}{ll}
-\eta_2< 0 & \textup{if } \alpha_1<1, \\ 
\eta_2 \left(\frac{\alpha_2}{\alpha_1}-1\right) <0 & \textup{if } \alpha_1> 2.
\end{array}
\right. 
\]

\[
\lim_{p\rightarrow 0^+} \frac{f(F^{-1}(p))}{g(G^{-1}(p))}(F^{-1}(p)-E[X])-(G^{-1}(p)-E[Y])= 
\left\{ 
\begin{array}{ll}
\eta_2 > 0 & \textup{if } \alpha_1<1, \\ 
\eta_2 \left(1-\frac{\alpha_2}{\alpha_1}\right)> 0 & \textup{if } \alpha_1> 2.
\end{array}
\right. 
\]

\[
\lim_{p\rightarrow 0^+} \delta (p) = 
\left\{ 
\begin{array}{ll}
-(\lambda_2-\eta_2) < 0 & \textup{if } \alpha_1<1, \\ 
\frac{\eta_2 \alpha_2}{\eta_1 \alpha_1} (\lambda_1- \eta_1) - (\lambda_2- \eta_2)  & \textup{if } \alpha_1> 2,
\end{array}
\right. 
\]

\[
\lim_{p\rightarrow 1^-} \delta (p) = 
\left\{ 
\begin{array}{ll}
-(\lambda_2+\eta_2) < 0 & \textup{if } \alpha_1<1, \\ 
\frac{\eta_2 \alpha_2}{\eta_1 \alpha_1} (\lambda_1+ \eta_1) - (\lambda_2+ \eta_2)  & \textup{if } \alpha_1> 2.
\end{array}
\right. 
\]

According to the discussion and noticing that $0 \leq \lim_{p\rightarrow 1^-} \delta (p) \leq \lim_{p\rightarrow 0^+} \delta (p)$, whenever $\alpha_1 \in (2,+\infty)$, we can state the following.

Let $X \sim T(\lambda_1, \eta_1, \alpha_1)$ and $Y\sim T(\lambda_2, \eta_2, \alpha_2)$ such that $\alpha_2 \in (1,2)$, then
\begin{enumerate}
	\item The qmit order does not hold.
	\item The dmrl order does not hold.
  \item $X\geq_{\textup{*}}Y$ if one of the following conditions hold
	\begin{enumerate}
		\item $\alpha_1 \in (0,1)$.
		\item $\alpha_1 \in (2,+\infty)$ and $\eta_2 \alpha_2(\lambda_1- \eta_1) < \eta_1 \alpha_1(\lambda_2- \eta_2)$.
	\end{enumerate}
  \item $X\leq_{\textup{*}}Y$ if, and only if, $\alpha_1 \in (2,+\infty)$ and $(\lambda_1+\eta_1) \alpha_2 \eta_2 > (\lambda_2+\eta_2) \alpha_1 \eta_1$.
\end{enumerate}

Case 3 does not characterize the star-shaped order since the sign of $\delta (p^*)$ should be also checked to conclude if $X\geq_{\textup{*}}Y$, for $\alpha_1 < 1$ where $p^*$ denotes the absolute extreme of the ratio $f(F^{-1} (p))/g(G^{-1} (p))$, but we do not have closed expression for $p^*$.

Let us consider the particular case where $X \sim T(4,1,2.5)$ and $Y \sim T(1.5,1,1.5)$. As we can see in Figure \ref{ex-tukey1}, on the left,   the ratio of the densities at the quantiles has just one maximum and the function $\delta$ is always positive (on the right). 
 
\begin{figure}[ht]\centering
\includegraphics[width=\linewidth]{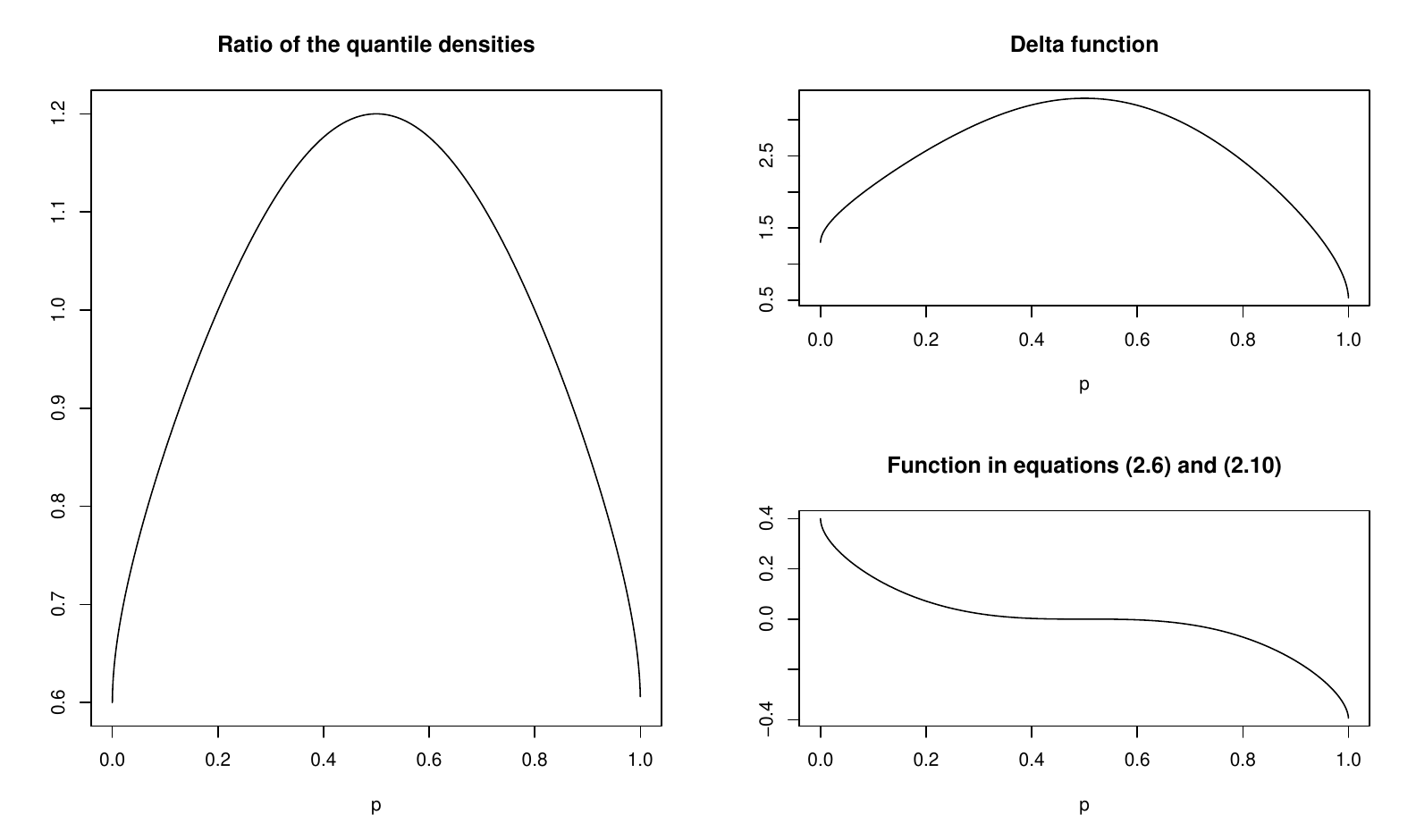} 
\caption{\label{ex-tukey1}\textit{Plots of the ratio of the quantile density functions (left),  the $\delta$ function  (upper right) and the function that appears in \eqref{lim_cond} and \eqref{lim_conddmrl} (lower right), for $X \sim T(4,1,2.5)$ and $Y \sim T(1.5,1,1.5)$.}}
\end{figure} 

From the previous discussion we have that the star-shaped order holds, but neither the qmit order nor the dmrl order hold, as we can see in Figure \ref{ex-tukey2}.

\begin{figure}[ht]\centering
\includegraphics[width=\linewidth]{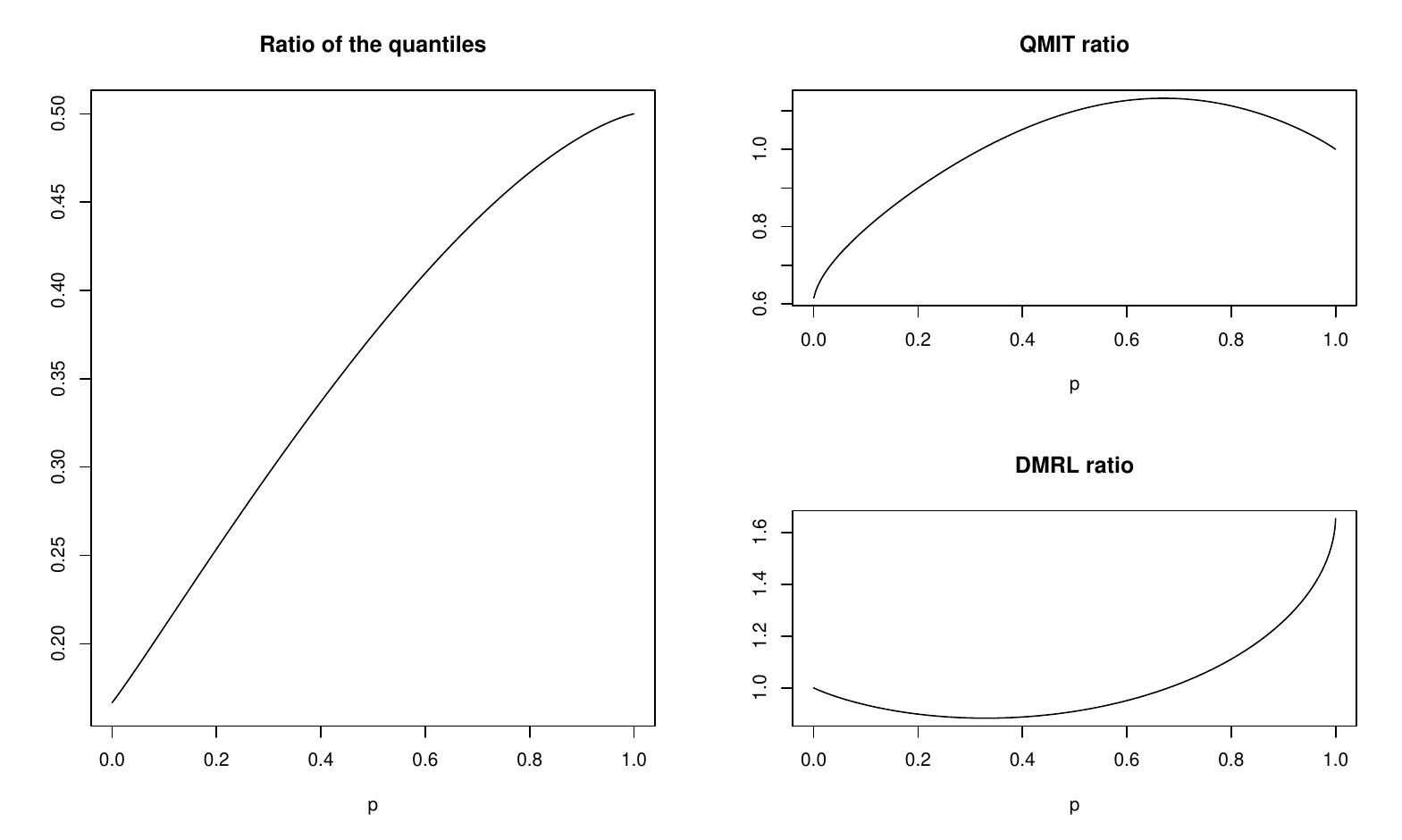} 
\caption{\label{ex-tukey2}\textit{Plots of the ratio of the quantile functions (left), the ratio of the mean inactivity quantile functions (upper right) and the ratio of the mean residual quantile funcions (lower right), for $X \sim T(4,1,2.5)$ and $Y \sim T(1.5,1,1.5)$.}}
\end{figure}

\end{Exam}

\subsection{Some implications in reliability and aging notions} 

One of the main applications of the transform orders is to provide useful characterizations of well-known aging classes, where aging means ``the phenomenon whereby an older system has a shorter remaining lifetime, in some statistical sense than a younger one''(see Bryson and Siddiqui, 1969). Let us recall the definitions, as well as the characterizations, of the aging classes Increasing Failure Rate (IFR), Decreasing Mean Residual Life (DMRL), Increasing Hazard Rate Weighted Average (IHRWA) and Increasing Hazard Rate Average (IFRA). 

\begin{Def}
Let $X$ be a non-negative and continuous random variable with hazard rate $r$ and mean residual life $m$ and $Y$ be an exponentially distributed random variable with scale parameter equal to 1. Then,
\begin{enumerate}

\item $X$ is said to be IFR if $r(t)$ is increasing over the support or, equivalently, if $X\le_c Y$.

\item $X$ is said to be DMRL if $m(t)$ is decreasing over the support or, equivalently, if $X\le_{dmrl} Y$.

\item $X$ is said to be IHRWA [DHRWA] if 
\begin{equation}\label{ifraclas}
\int_{0}^{t} r(x)dW_t (x) \text{  is increasing [decreasing] in } t, \text{ where } W_t(x)=\frac{\int_{0}^{x} F(u)du}{\int_{0}^{t} F(u)du}
\end{equation}
or, equivalently, if $X\le_{qmit} [\ge_{qmit}]Y$.

\item $X$ is said to be IFRA [DFRA] if the hazard rate on average function
\[
 \frac{\int_{0}^{t} r(x)dx}{t} \text{  is increasing [decreasing] in } t\geq 0,
\]
or, equivalently, if $X\le_{\star} [\ge_{\star}] Y$.

\end{enumerate} 
\end{Def}

We refer the reader to (Shaked et. al 2007 and Kochar et. al 1987) to consult the proofs of the given characterizations in the case of the aging classes IFR, DMRL and IFRA. Regarding the class IHRWA, this is a recent aging notion which appears as an intermediate class between IFR and IFRA (see Arriaza et. al, (2017) for definition and characterization with the qmit order). Indeed, it is expressed as a property of the hazard rate function $r(x)$, see \eqref{ifraclas}. It represents the monotonicity of the weighted average of $r(x)$ and can be considered as a property weaker than monotonicity and stronger than monotonicity in average of $r(x)$.

From \eqref{cuadro}, it is clear that the following implications hold among the different aging notions: 
\begin{equation*}
\begin{tabular}{c c c c c }
  \text{IFR} & $\Rightarrow$ & \text{IHRWA} & $\Rightarrow$ &  \text{IFRA} \\
 & \rotatebox{45}{$\Downarrow$} & &  &   \\
  &  & \text{DMRL} &  &   
\end{tabular}
\end{equation*}

Frequently, survival and failure times are modeled by monotone reliability functions as in the previous definitions. However, this is inappropriate in some cases, for example, where mortality caused by a disease reaches a peak after a period and then  decreases slowly. In such cases, a unimodality model will be more desirable. Lai and Xie (2006) give a further discussion on the topic of non-monotone reliability functions with several examples. In particular, let us recall the definition of the property of having a bathtub [upside down bathtub] hazard rate function. 

\begin{Def}
Let $X$ be a non-negative and continuous random variable. It is said that $X$ has a bathtub (BT) [upside down bathtub (UBT)] hazard rate if the hazard rate function starts decreasing [increasing] and ends increasing [decreasing]. Analogously, it is defined a BT [UBT] mean residual life function.
\end{Def}

For a non-negative and continuous random variable $X$ and an exponentially distributed random variable $Y$ with scale parameter equal to 1, we get that
\[
\frac{f(F^{-1}(p))}{g(G^{-1}(p))} = r(F^{-1}(p)).
\]
Consequently, the behavior of the hazard rate $r(t)$ in $t$ is the same as the one of the ratio of the quantile density functions in $p$. That means, the results given in Section \ref{S2} provide new characterizations of some aging classes for random variables with BT or UBT hazard rates. In particular, the following corollaries for the DMRL, IHRWA and IFRA aging classes follow from Corollaries \ref{Th2.5} and \ref{Th2.3} and Theorem \ref{Th2.2}, respectively.
 
\begin{Cor}\label{Cor1}
Let $X$ be a non-negative random variable with interval support, mean residual life $m$ and BT [UBT] hazard rate $r$. Then, 
\begin{itemize}
	\item $m(t)$ is decreasing [increasing] if, and only if, $\lim_{p \rightarrow 0^+} r(F^{-1}(p)) \le [ \ge ] 1/E[X].$
	\item Otherwise, $m(t)$ is UBT [BT].
\end{itemize}
\end{Cor}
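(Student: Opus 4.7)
The plan is to reduce Corollary \ref{Cor1} to the dmrl-order results of Section \ref{S2} by comparing $X$ with an auxiliary $Y\sim \textup{Exp}(1)$. For this $Y$ one has $g(G^{-1}(p))=1-p$, so
\[
\frac{f(F^{-1}(p))}{g(G^{-1}(p))} = r(F^{-1}(p)),
\]
hence $r$ is BT (resp.\ UBT) iff the quantile densities ratio is decreasing-then-increasing (resp.\ increasing-then-decreasing). Also, since $l(t)\equiv 1$, the defining ratio of the dmrl order collapses to $1/m(F^{-1}(p))$, so $m$ decreasing is equivalent to $X\leq_{\textup{dmrl}} Y$ and $m$ increasing to $X\geq_{\textup{dmrl}} Y$.

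For the BT case I would apply Corollary \ref{Th2.5} directly. Using $E[Y]=1$, $G^{-1}(0^+)=0$ and (standard in the aging context) $F^{-1}(0^+)=l_X=0$, its limit condition simplifies to $1 - r(F^{-1}(0^+))\, E[X] \ge 0$, i.e.\ $\lim_{p\to 0^+} r(F^{-1}(p)) \le 1/E[X]$, as stated. For the ``otherwise'' clause I would examine the sign of the auxiliary $\delta_{\textup{dmrl}}$ of the proof of Theorem \ref{Thdmrln}. A short computation using $(1-q)/g(G^{-1}(q))=1$ and the substitution $u=F^{-1}(q)$ gives
\[
\delta_{\textup{dmrl}}(p) = (1-p)\bigl[1 - r(F^{-1}(p))\, m(F^{-1}(p))\bigr],
\]
so automatically $\delta_{\textup{dmrl}}(1^-)=0$ and $\delta_{\textup{dmrl}}(0^+) = 1 - r(F^{-1}(0^+))\, E[X]$. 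From the proof of Theorem \ref{Thdmrln}, $\delta_{\textup{dmrl}}$ has the opposite monotonicity to $r(F^{-1}(p))$: in the BT case it is increasing on $(0,p^*)$ and decreasing on $(p^*,1)$. Combined with $\delta_{\textup{dmrl}}(1^-)=0$, this forces $\delta_{\textup{dmrl}}(p^*)>0$. When the stated condition fails, $\delta_{\textup{dmrl}}(0^+)<0$, so $\delta_{\textup{dmrl}}$ has a single sign change $(-,+)$; since the sign of $\delta_{\textup{dmrl}}$ agrees with the sign of $\tfrac{d}{dp}[1/m(F^{-1}(p))]$, the function $1/m(F^{-1}(p))$ is decreasing-then-increasing, which through the monotone change of variable $t=F^{-1}(p)$ says $m(t)$ is UBT.

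The UBT case is handled symmetrically. Now $\delta_{\textup{dmrl}}$ is decreasing on $(0,p^*)$ and increasing on $(p^*,1)$, so $\delta_{\textup{dmrl}}(1^-)=0$ forces $\delta_{\textup{dmrl}}(p^*)<0$. Hence $m$ increasing (equivalently $\delta_{\textup{dmrl}}\le 0$ throughout $(0,1)$) is equivalent to $\delta_{\textup{dmrl}}(0^+)\le 0$, which reduces to $\lim_{p\to 0^+} r(F^{-1}(p)) \ge 1/E[X]$. If this fails, $\delta_{\textup{dmrl}}$ has the sign pattern $(+,-)$, so $1/m(F^{-1}(p))$ is increasing-then-decreasing, hence $m$ is BT. The main bookkeeping obstacle is keeping the direction of the inequalities and the shape of $m$ straight between the BT and UBT cases; the clean structural point that makes the proof short is that $\delta_{\textup{dmrl}}(1^-)=0$ pins down the sign of the interior extremum of $\delta_{\textup{dmrl}}$ in each case, reducing both halves of the statement to the single endpoint condition at $p\to 0^+$.
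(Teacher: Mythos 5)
Your proposal is correct and follows the route the paper intends: the paper gives no written proof of Corollary \ref{Cor1}, merely stating that it follows from Corollary \ref{Th2.5} upon taking $Y\sim\textup{Exp}(1)$, so that $f(F^{-1}(p))/g(G^{-1}(p))=r(F^{-1}(p))$ and the dmrl ratio collapses to $1/m(F^{-1}(p))$ (with the UBT half handled, as in the Remark on Case 2 of the summarized algorithm, by the reversed dmrl comparison). Your closed form
\[
\delta_{\textup{dmrl}}(p)=(1-p)\bigl[1-r(F^{-1}(p))\,m(F^{-1}(p))\bigr]
\]
is a genuine (and useful) addition: it makes both bullet points of the corollary transparent, whereas the paper would obtain the sign of $\delta_{\textup{dmrl}}$ on the final interval from the nonnegativity of the integrand in the proof of Theorem \ref{Thdmrln}, and would handle the UBT bracket by swapping the roles of $X$ and $Y$ in Corollary \ref{Th2.5}.

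One step is asserted too quickly: you say $\delta_{\textup{dmrl}}(1^-)=0$ holds ``automatically,'' but the formula alone does not give this, since $(1-p)\to 0$ while $r(F^{-1}(p))\,m(F^{-1}(p))$ could a priori diverge. It does hold under the hypotheses, and the fix is one line: $(1-p)\,r(F^{-1}(p))\,m(F^{-1}(p))=r(F^{-1}(p))\int_{F^{-1}(p)}^{u_X}\overline F(u)\,du$; in the BT case $r$ is eventually increasing, whence $\overline F(x)\le \overline F(t)e^{-r(t)(x-t)}$ gives $r(t)m(t)\le 1$ on the tail (so in fact $0\le\delta_{\textup{dmrl}}\le 1-p$ there, recovering the paper's integrand argument); in the UBT case $r$ is eventually decreasing, hence bounded near $u_X$, and $\int_{t}^{u_X}\overline F(u)\,du\to 0$ by the standing finite-mean assumption. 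With that line added, your argument is complete; the remaining simplification $\lim_{p\to 0^+}\delta_{\textup{dmrl}}(p)=1-E[X]\lim_{p\to 0^+}r(F^{-1}(p))$ uses $l_X=0$, which you flag explicitly and which the paper's statement of the corollary assumes implicitly as well.
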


\begin{Cor}
Let $X$ be a non-negative random variable with interval support and UBT [BT] hazard rate $r$. Then 
\begin{itemize}
	\item $X$ is IHRWA [DHRWA] if, and only if,
 \[
 \lim_{p \rightarrow 1^-} \left(r(F^{-1}(p)(F^{-1}(p)-E[X])+\log(1-p) +1\right) \geq [\leq] 0. 
 \] 
	\item Otherwise, the function $\int_{0}^{t} r(x)dW_t (x)$ is UBT [BT].
\end{itemize}
\end{Cor}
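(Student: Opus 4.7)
The plan is to reduce the statement to Corollary \ref{Th2.3} by taking $Y$ to be an exponentially distributed random variable with scale parameter $1$, and then to extend the same argument by duality to cover the BT case.

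First I would record the ingredients for $Y\sim\mathrm{Exp}(1)$: one has $G^{-1}(p)=-\log(1-p)$, so $g(G^{-1}(p))=1-p$, $E[Y]=1$, and therefore the key ratio simplifies to
\[
\frac{f(F^{-1}(p))}{g(G^{-1}(p))}=\frac{f(F^{-1}(p))}{1-p}=r(F^{-1}(p)).
\]
Consequently, $r$ being UBT is exactly the hypothesis of Corollary \ref{Th2.3} (the ratio starts increasing and ends decreasing), while $r$ being BT is the mirror situation. Since by the definition of IHRWA and its characterization, $X$ is IHRWA iff $X\le_{\mathrm{qmit}}Y$, Corollary \ref{Th2.3} applied directly yields
\[
\lim_{p\rightarrow 1^-}r(F^{-1}(p))(F^{-1}(p)-E[X])-\bigl(-\log(1-p)-1\bigr)\geq 0,
\]
which is the stated inequality for the UBT/IHRWA case.

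For the BT/DHRWA case, I would mimic the proof of Theorem \ref{Thqmitn}. The auxiliary function
\[
\delta_{qmit}(p)=\int_0^p q\left(\frac{f(F^{-1}(p))}{g(G^{-1}(p))}\frac{1}{f(F^{-1}(q))}-\frac{1}{g(G^{-1}(q))}\right)dq
\]
again shares monotonicity with the ratio $f(F^{-1}(p))/g(G^{-1}(p))$. When the latter is decreasing then increasing (BT), $\delta_{qmit}$ starts at $0$, first decreases into negative values and then increases; its suprema are at the endpoints $0$ and $1^-$. Characterizing DHRWA as $X\ge_{\mathrm{qmit}}Y$ amounts to requiring $\delta_{qmit}\leq 0$ throughout $(0,1)$, which reduces to $\lim_{p\to 1^-}\delta_{qmit}(p)\leq 0$, giving exactly the stated limit condition with the reverse inequality after substituting the exponential quantities.

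Finally, for the ``otherwise'' clause, I would observe that if the limit condition fails in the UBT case, then $\delta_{qmit}$ starts at $0$, rises to a positive maximum, and ends strictly negative, so it has exactly one sign change $(+,-)$; this sign change is transferred to the derivative of the ratio $\int_0^p q/g(G^{-1}(q))\,dq\big/\int_0^p q/f(F^{-1}(q))\,dq$, so this ratio (which, via the change of variable $t=F^{-1}(p)$, coincides with $\int_0^t r(x)\,dW_t(x)$) is UBT. The BT case is symmetric: a failing limit produces a single sign change $(-,+)$ in $\delta_{qmit}$, forcing the function to be BT. The main obstacle is the BT branch, since Corollary \ref{Th2.3} is only stated for the ``$\le$'' direction; however, the required dual argument is a routine adaptation of the monotonicity analysis already carried out in Theorem \ref{Thqmitn}, so no genuinely new computation is needed.
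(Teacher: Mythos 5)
Your proposal is correct and takes essentially the same route as the paper: the paper obtains this corollary by specializing Corollary \ref{Th2.3} to $Y\sim Exp(1)$, using $f(F^{-1}(p))/g(G^{-1}(p))=r(F^{-1}(p))$ and the characterization of IHRWA as $X\leq_{\textup{qmit}}Y$, with the BT/DHRWA branch and the ``otherwise'' clause handled through the dual monotonicity behaviour of $\delta_{qmit}$ exactly as you describe. Your explicit sign-change analysis for the dual case and the unimodality conclusion simply fills in details the paper leaves as analogous.
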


\begin{Cor}
Let $X$ be a non-negative random variable with interval support and UBT [BT] hazard rate $r$, where the maximum (minimum) is attained at $F^{-1}(p^*)$. Then, 
\begin{itemize}
	\item $X$ is IFRA  [DFRA] if, and only if, 
	$$\lim_{p \rightarrow 0^+} \left(F^{-1}(p)r(F^{-1}(p))+\log(1-p)\right) \geq [\leq] 0$$
	\noindent and 
	$$\lim_{p \rightarrow 1^-}  \left(F^{-1}(p)r(F^{-1}(p))+\log(1-p)\right)  \geq [\leq] 0. $$
        \item $X$ is DFRA [IFRA] if, and only if, 
	$$ \left(F^{-1}(p^*)r(F^{-1}(p^*))+\log(1-p^*)\right)  \leq [\geq] 0.$$
\end{itemize}
\end{Cor}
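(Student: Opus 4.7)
The plan is to specialize Theorem \ref{Th2.2} (and its obvious BT counterpart) to the case where $Y$ is exponentially distributed with scale parameter $1$. With $g(y)=e^{-y}$ we have $G^{-1}(p)=-\log(1-p)$ and $g(G^{-1}(p))=1-p$, so the quantile density ratio collapses to
\[
\frac{f(F^{-1}(p))}{g(G^{-1}(p))}=\frac{f(F^{-1}(p))}{1-p}=r(F^{-1}(p)),
\]
and the auxiliary function \eqref{delta} becomes exactly
\[
\delta(p)=F^{-1}(p)\,r(F^{-1}(p))+\log(1-p),
\]
which is the expression appearing in the statement. Moreover, IFRA [DFRA] is by definition the same as $X\leq_\star Y$ [$X\geq_\star Y$], so the corollary is merely the translation of Theorem \ref{Th2.2} to this particular $Y$.

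For the UBT branch (upper brackets) I would argue as follows. Because $F^{-1}$ is non-decreasing, the composition $r(F^{-1}(p))$ inherits the UBT shape of $r$ with peak at $p^*$, so the hypothesis of Theorem \ref{Th2.2} is satisfied. Case 1 of that theorem then gives the IFRA characterization with the two one-sided limits $\lim_{p\to 0^+}\delta(p)\geq 0$ and $\lim_{p\to 1^-}\delta(p)\geq 0$, while Case 2 gives the DFRA characterization with $\delta(p^*)\leq 0$. Substituting the explicit formula for $\delta$ above recovers the stated conditions verbatim.

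For the BT branch (lower brackets), Theorem \ref{Th2.2} is not directly applicable because the ratio is now decreasing then increasing. The hard part is to justify the analogue of that theorem in this situation, but this only requires re-running the argument in the proof of Theorem \ref{Th2.1} with the roles of increasing and decreasing interchanged: the function $\delta(p)$ becomes decreasing then increasing with its unique infimum attained at $p^*$ and its two supremums attained in the limits $p\to 0^+$ and $p\to 1^-$. Consequently, $\delta\leq 0$ on $(0,1)$ (equivalently, $X\geq_\star Y$, i.e.\ DFRA) holds iff both endpoint limits of $\delta$ are non-positive, and $\delta\geq 0$ on $(0,1)$ (equivalently, $X\leq_\star Y$, i.e.\ IFRA) holds iff $\delta(p^*)\geq 0$. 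Substituting the closed form of $\delta$ once more yields the two displayed conditions of the corollary.
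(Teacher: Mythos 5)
Your proposal is correct and follows essentially the same route as the paper, which obtains this corollary by specializing Theorem \ref{Th2.2} to $Y\sim Exp(1)$, where $g(G^{-1}(p))=1-p$ turns the quantile density ratio into $r(F^{-1}(p))$ and $\delta(p)$ into $F^{-1}(p)r(F^{-1}(p))+\log(1-p)$, with IFRA [DFRA] being by definition $X\leq_{\star}Y$ [$X\geq_{\star}Y$]. The paper leaves the BT (bracketed) case implicit as the analogous dual, and your re-run of the Theorem \ref{Th2.1} argument with the monotonicity reversed (so that $\delta$ attains its infimum at $p^*$ and its suprema at the endpoints) is exactly the intended justification.
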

  
As long as Corollary \ref{Cor1} has been given in Gupta and Akman (1995a) and (1995b) (see also Lai and Xie, 2006, p. 118, for a different approach), the remaining corollaries can not be found in the literature, as far as we know. These results show that the aging of a random variable can be monotone in some sense when the hazard rate is not monotone. 

Nair et. al (2013) provide a detailed discussion on parametric families with closed expressions for the quantile functions. They also analyze the shape of their hazard rate functions. In particular, they show that the hazard rate of a Govindarajulu distributed random variable is monotone for $\beta \leq 1$ and BT for $\beta >1$. So, let us apply the previous corollaries to check if the aging of a Govindarajulu distributed random variable is monotone, in some other sense, when $\beta>1$, i.e., whenever the hazard rate function is non-monotone.

\begin{figure}[ht]\centering
\includegraphics[scale=0.45]{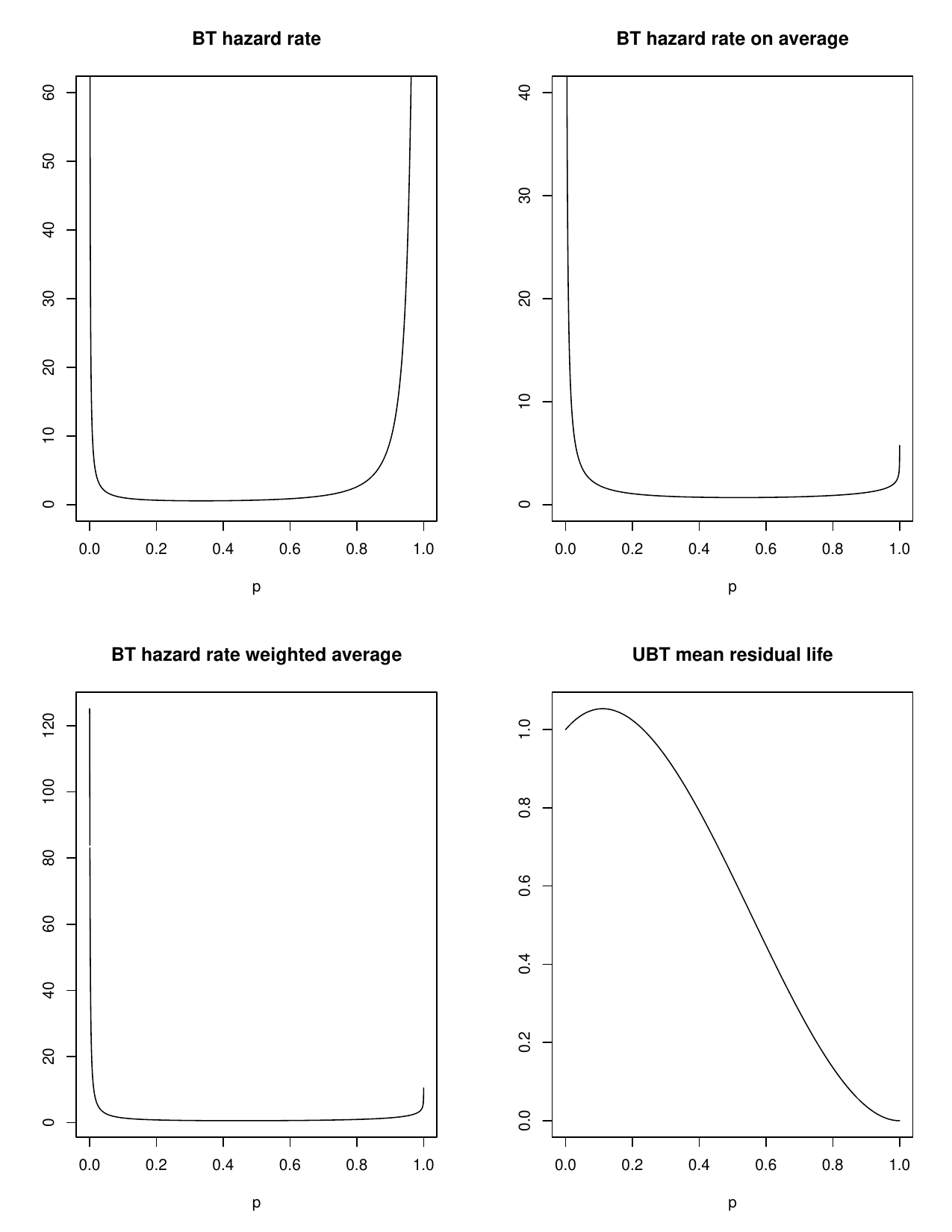} 
\caption{\label{aging_ejemplo}\textit{Plots of the hazard rate function (upper left plot), the hazard rate on average function (upper right plot), the function given in \eqref{ifraclas} (lower left plot) and the hazard rate weighted on average function (lower right plot), for $X \sim G(0,2,2)$.}}
\end{figure} 

\begin{Exam} Let us consider a Govindarajulu distributed random variable $X$ and an exponentially distributed random variable $Y$, that is, $X \sim G(\theta, \sigma, \beta)$ and $Y\sim Exp(1)$, with quantile functions
\[
F^{-1}(p)= \theta + \sigma ((\beta+1)p^{\beta}-\beta p^{\beta+1}) \text{ , for all } p\in (0,1)
\]
and 
\[
G^{-1}(p)=-\log(1-p),
\]
\noindent respectively, where $\theta \geq 0$, $\sigma>0$ and $\beta>0$. Let us denote by $r$ and $m$ the hazard rate and the mean residual life function of $X$, respectively. According to the previous discussion, we assume $\beta > 1$ and, consequently, $r(F^{-1}(p))$ is BT with mode $p^*=(\beta -1)/(\beta +1)$ (see Nair et. al, 2013). In order to analyze the aging of $X$ in such cases, we proceed to check the corresponding limit conditions appearing in the previous corollaries. It is easy to see that
\[
\lim_{p \rightarrow 0^+} r(F^{-1}(p)) = +\infty > \frac{1}{ E[X]}, 
\]
therefore, we conclude that $m(t)$ is UBT. Furthermore, 
\[
 \lim_{p \rightarrow 1^-} (r(F^{-1}(p))(F^{-1}(p)-E[X])+\log(1-p)+1)= +\infty,
\]
then, the function $\int_{0}^{t} r(x)dW_t (x)$ is BT. Finally, we have that 
\[
\lim_{p \rightarrow 0^+}  (F^{-1}(p)r(F^{-1}(p))+\log(1-p))= 0 \text{ and }
\lim_{p \rightarrow 1^-} (F^{-1}(p)r(F^{-1}(p))+\log(1-p))=+\infty \geq 0,
\]
that means, $X$ is not DFRA. Now, $X$ is IFRA if, and only if, 
\[
(F^{-1}(p^*)r(F^{-1}(p^*))+\log(1-p^*)) \geq 0.
\]
To sum up, for $X \sim G(\theta, \sigma, \beta)$, we have that 
\begin{itemize}
\item $r(t)$ is monotone for all $\beta \leq 1$.
\item $r(t)$ is BT, $m(t)$ is UBT, $\int_{0}^{t} r(x)dW_t (x)$ is UBT, for all $\beta \geq 1$.
\item $X$ is not DFRA, for all $\beta \geq 1$.
\item $X$ is IFRA if, and only if, $(F^{-1}(p^*)r(F^{-1}(p^*))+\log(1-p^*)) \geq 0$, for all $\beta \geq 1$.
\end{itemize}

Figure \ref{aging_ejemplo} shows a particular case for $X \sim G(0, 2, 2)$. As for the hazard rate weighted average plot is concerned, we have plotted the function
\[
\frac{\int_{0}^{p}\frac{q}{1-q} dq }{\int_{0}^{p}\frac{q}{f(F^{-1}(p))} dq},
\]
\noindent which has the same behavior than the  hazard rate weighted average function.
\end{Exam}

\section{Conclusions}

In the present paper, we have introduced new characterizations for the star-shaped order, the ps order, the qmit order and the dmrl order when the ratio of the quantile density functions has $n\geq 1$ finite relative extremes. These results provide a new perspective when we are interested in checking if any of these orders hold, even though the convex transform order is not satisfied. We have illustrated the utility of the proposed characterizations with an application of these results to compare two Tukey generalized distributed random variables according to their parameters. 

Furthermore, we want to point out that the unimodality of the quantile density ratio is not a strong assumption. In fact, there exist a vast number of probability distributions in literature which satisfy this condition for some values of their parameters. Indeed, we have provided several examples where this condition holds.

Finally, as a direct consequence of our main results, we have provided new characterizations for the DMRL, the IFRWA and the IFRA aging notions when the IFR is not satisfied. 

\section*{Acknowledgements} The authors want to acknowledge the comments by an anonymous referee that have greatly improved the presentation and the contents of this paper. We also want to thank Joseph Hooly for his help to improve this paper language-wise. F\'{e}lix Belzunce and Carolina Mart\'{i}nez-Riquelme want to acknowledge the support received by the Ministerio de Econom\'{i}a, Industria y Competitividad under grant MTM2016-79943-P (AEI/FEDER, UE) and Antonio Arriaza acknowledges the support received by the Ministerio de Econom\'{i}a y Competitividad under grant MTM2017-89577-P.

\end{document}